\documentclass[12pt,a4paper]{article}

\usepackage[a4paper,margin=27mm]{geometry}
\usepackage[T1]{fontenc}
\usepackage[utf8]{inputenc}
\usepackage{lmodern}
\usepackage{orcidlink}
\usepackage{microtype}
\usepackage{amsmath,amssymb,amsthm,mathtools}
\usepackage{enumitem}
\usepackage{booktabs}
\usepackage{xcolor}
\usepackage{graphicx}
\usepackage{tikz}
\usepackage{pgfplots}
\pgfplotsset{compat=1.18}
\usepackage{float}
\usetikzlibrary{arrows.meta,calc,positioning,patterns,shapes.geometric,3d}
\usepackage{hyperref}
\hypersetup{colorlinks=true,linkcolor=blue!50!black,citecolor=blue!50!black,urlcolor=blue!50!black}

\newcommand{\Rtwo}{R_2}
\newcommand{\DeltaTwo}{\Delta_2}
\newcommand{\dd}{\,\mathrm{d}}
\newcommand{\argmin}{\operatorname*{arg\,min}}
\newcommand{\argmax}{\operatorname*{arg\,max}}

\newcommand{\RR}{\mathbb{R}}
\newcommand{\vol}{\operatorname{vol}}
\newcommand{\boxop}{\operatorname{box}}
\newcommand{\eps}{\varepsilon}
\newcommand{\muvarphi}{\mu_\varphi}

\theoremstyle{plain}
\newtheorem{theorem}{Theorem}[section]
\newtheorem{lemma}[theorem]{Lemma}

\newtheorem{corollary}[theorem]{Corollary}

\theoremstyle{definition}
\newtheorem{definition}[theorem]{Definition}
\newtheorem{problem}[theorem]{Problem}

\theoremstyle{remark}
\newtheorem{remark}[theorem]{Remark}

\newenvironment{pseudocode}[1]{%
  \begin{figure}[H]
  \centering
  \begin{minipage}{0.92\textwidth}\small
  \hrule\medskip
  \textbf{#1}\par\medskip
}{%
  \medskip\hrule
  \end{minipage}
  \end{figure}
}

\title{Three-Objective Integral R2 Subset Selection: NP-Hardness and Submodular Approximation}
\author{Michael T. M. Emmerich\orcidlink{0000-0002-7342-2090}}
\date{\today}

\begin{document}
\maketitle

\begin{abstract}
Selecting a fixed number of representative points from a finite Pareto-front approximation is a fundamental post-processing task in multiobjective optimization. This paper studies this problem for the integral $\Rtwo$ indicator in three objectives, where the indicator is defined as the integral of the lower envelope of weighted Tchebycheff scalarizations over the two-dimensional weight simplex. We provide two complementary algorithmic results. On the positive side, we show that the integral $\Rtwo$ improvement with respect to any fixed baseline is a monotone submodular set function. For the usual ideal-point based $\Rtwo$ indicator, with the ideal point fixed, this yields a direct gap-reduction guarantee: greedy selection closes at least a $(1-1/e)$-fraction of the maximum possible $\Rtwo$ gap between a fixed dominated anchor value and the best cardinality-$k$ value. We also give a tested greedy implementation that evaluates exact integral $\Rtwo$ values by subdivision, with worst-case running time $O(n^6)$. On the negative side, we prove that exact fixed-cardinality subset selection is NP-hard already in three objectives. The hardness proof uses a perspective transformation that maps Tchebycheff-shadow improvements to a weighted anchored-box union problem with density $(x_1+x_2+x_3)^{-4}$, and then adapts the three-dimensional anchored-box construction of Bringmann, Cabello, and Emmerich. Together, these results separate the tractable two-objective case from the three-objective case while identifying a principled approximation route based on submodular optimization.
\end{abstract}

\paragraph{Keywords.}
Multiobjective optimization; Pareto-front approximation; integral $\Rtwo$ indicator; Tchebycheff scalarization; Tchebycheff shadows; fixed-cardinality subset selection; submodular optimization; greedy approximation; NP-hardness; anchored boxes.

\section{Introduction}

Finite Pareto-front approximations are often produced as an intermediate product of evolutionary multiobjective optimization, Bayesian multiobjective optimization, interactive decision support, and multicriteria design workflows.  In many applications the archive is larger than the number of alternatives that can be inspected by a decision maker or used in a subsequent expensive simulation, certification, or robustness analysis.  This leads to a basic algorithmic post-processing problem: given a finite approximation $P$ and a cardinality $k$, select a subset of $k$ points that preserves as much indicator quality as possible.

We study this problem for the integral $\Rtwo$ indicator in three objectives.  The indicator is based on weighted Tchebycheff scalarization and can be interpreted as an average, over a continuum of preference weights, of the best scalarized loss achieved by the selected set.  We use a minimization convention and assume that the utopian point has been translated to the origin.  For a point $p=(p_1,p_2,p_3)\in\RR^3_{>0}$ and a weight vector $w$ in the two-dimensional simplex
\[
\DeltaTwo=\{w\in\RR^3_{\geq 0}:w_1+w_2+w_3=1\},
\]
the weighted Tchebycheff scalarization is
\[
 g_p(w)=\max\{w_1p_1,w_2p_2,w_3p_3\}.
\]
For a finite set $S$ of objective vectors, the Tchebycheff envelope is
\[
 \tau_S(w)=\min_{p\in S}g_p(w),
\]
and the integral $\Rtwo$ value is
\[
 \Rtwo(S)=\int_{\DeltaTwo}\tau_S(w)\dd w.
\]
Thus the set is judged by the lower envelope of all pointwise Tchebycheff losses.  The integration is not over a finite weight grid: every normalized weight contributes.  In a two-objective problem this means integrating an envelope over a line segment of weights.  In a three-objective problem it means integrating an envelope over the triangular simplex.  Smaller values are better.

The geometric picture used in this report is the Tchebycheff-shadow representation.  In two objectives, each archive point defines a V-shaped loss curve over the weight interval, and the integral $\Rtwo$ value is the area under the lower envelope of these curves.  In three objectives, each archive point defines a piecewise-linear surface over the weight simplex, and the integral $\Rtwo$ value is the volume under the lower envelope of these surfaces.  Adding a point lowers the old envelope on the weights for which the new point becomes best; the vertical volume between the old and new envelopes is the point's Tchebycheff-shadow improvement.

\begin{figure}[H]
\centering
\begin{minipage}{0.48\textwidth}
\centering
\begin{tikzpicture}
\begin{scope}[xshift=0cm]
\begin{axis}[
  width=0.98\linewidth, height=0.68\linewidth, axis lines=left,
  xmin=0, xmax=0.72, ymin=0, ymax=0.68,
  xlabel={$f_1$}, ylabel={$f_2$}, grid=both,
  grid style={line width=.1pt, draw=gray!18}, major grid style={line width=.2pt, draw=gray!32},
  tick label style={font=\scriptsize}, label style={font=\small}, clip=false]
  \addplot[gray!70, thick, densely dashed] coordinates {(0.10,0.60) (0.18,0.43) (0.30,0.30) (0.45,0.18) (0.62,0.10)};
  \draw[violet!65, thick] (axis cs:0,0.60) -- (axis cs:0.10,0.60) -- (axis cs:0.10,0);
  \draw[blue!60!black, thick]   (axis cs:0,0.43) -- (axis cs:0.18,0.43) -- (axis cs:0.18,0);
  \draw[orange!85!black, thick] (axis cs:0,0.30) -- (axis cs:0.30,0.30) -- (axis cs:0.30,0);
  \draw[green!55!black, thick] (axis cs:0,0.18) -- (axis cs:0.45,0.18) -- (axis cs:0.45,0);
  \draw[gray!70!black, thick] (axis cs:0,0.10) -- (axis cs:0.62,0.10) -- (axis cs:0.62,0);
  \addplot[only marks, mark=*, mark size=2.1pt, violet!80!black] coordinates {(0.10,0.60)};
  \addplot[only marks, mark=*, mark size=2.1pt, blue!70!black] coordinates {(0.18,0.43)};
  \addplot[only marks, mark=*, mark size=2.1pt, orange!90!black] coordinates {(0.30,0.30)};
  \addplot[only marks, mark=*, mark size=2.1pt, green!60!black] coordinates {(0.45,0.18)};
  \addplot[only marks, mark=*, mark size=2.1pt, gray!80!black] coordinates {(0.62,0.10)};
  \node[font=\scriptsize, fill=white, inner sep=0.8pt, anchor=south west, text=violet!80!black] at (axis cs:0.105,0.602) {$p_1$};
  \node[font=\scriptsize, fill=white, inner sep=0.8pt, anchor=south west, text=blue!70!black] at (axis cs:0.185,0.432) {$p_2$};
  \node[font=\scriptsize, fill=white, inner sep=0.8pt, anchor=south west, text=orange!90!black] at (axis cs:0.305,0.302) {$p_3$};
  \node[font=\scriptsize, fill=white, inner sep=0.8pt, anchor=south west, text=green!60!black] at (axis cs:0.455,0.182) {$p_4$};
  \node[font=\scriptsize, fill=white, inner sep=0.8pt, anchor=south west, text=gray!80!black] at (axis cs:0.625,0.102) {$p_5$};
  \node[font=\scriptsize, anchor=north east] at (axis cs:0,0) {$z^+$};
\end{axis}
\end{scope}
\end{tikzpicture}
\par\smallskip
{\small\textbf{(a)} Biobjective Pareto-front approximation\par}
\end{minipage}\hfill
\begin{minipage}{0.48\textwidth}
\centering
\begin{tikzpicture}
\begin{scope}[xshift=0cm]
\begin{axis}[
  width=0.98\linewidth, height=0.68\linewidth, axis lines=left,
  xmin=0, xmax=1, ymin=0, ymax=0.32,
  xlabel={$\lambda$ in $(\lambda,1-\lambda)$}, ylabel={Tchebycheff loss},
  xtick={0,0.2,0.4,0.6,0.8,1.0}, ytick={0,0.08,0.16,0.24,0.32},
  grid=both, grid style={line width=.1pt, draw=gray!18}, major grid style={line width=.2pt, draw=gray!32},
  tick label style={font=\scriptsize}, label style={font=\small}, clip=true]
  \addplot[draw=none, fill=blue!18, domain=0:1, samples=300]
    {min(min(min(min(max(0.10*x,0.60*(1-x)),max(0.18*x,0.43*(1-x))),max(0.30*x,0.30*(1-x))),max(0.45*x,0.18*(1-x))),max(0.62*x,0.10*(1-x)))} \closedcycle;
  \addplot[violet!80!black, densely dashed, domain=0:1, samples=100] {max(0.10*x,0.60*(1-x))};
  \addplot[blue!70!black, densely dashed, domain=0:1, samples=100] {max(0.18*x,0.43*(1-x))};
  \addplot[orange!90!black, densely dashed, domain=0:1, samples=100] {max(0.30*x,0.30*(1-x))};
  \addplot[green!60!black, densely dashed, domain=0:1, samples=100] {max(0.45*x,0.18*(1-x))};
  \addplot[gray!80!black, densely dashed, domain=0:1, samples=100] {max(0.62*x,0.10*(1-x))};
  \node[font=\scriptsize, fill=white, inner sep=0.8pt, text=violet!80!black] at (axis cs:0.52,0.288) {$\tau_1$};
  \node[font=\scriptsize, fill=white, inner sep=0.8pt, text=blue!70!black] at (axis cs:0.29,0.305) {$\tau_2$};
  \node[font=\scriptsize, fill=white, inner sep=0.8pt, text=orange!90!black] at (axis cs:0.88,0.270) {$\tau_3$};
  \node[font=\scriptsize, fill=white, inner sep=0.8pt, text=green!60!black] at (axis cs:0.86,0.235) {$\tau_4$};
  \node[font=\scriptsize, fill=white, inner sep=0.8pt, text=gray!80!black] at (axis cs:0.73,0.245) {$\tau_5$};
  \addplot[blue!70!black, very thick, domain=0:1, samples=300]
    {min(min(min(min(max(0.10*x,0.60*(1-x)),max(0.18*x,0.43*(1-x))),max(0.30*x,0.30*(1-x))),max(0.45*x,0.18*(1-x))),max(0.62*x,0.10*(1-x)))};
  \node[font=\scriptsize, fill=white, inner sep=1.0pt, anchor=west] at (axis cs:0.28,0.040) {area = integral $R_2$};
\end{axis}
\end{scope}
\end{tikzpicture}
\par\smallskip
{\small\textbf{(b)} Biobjective Tchebycheff shadows\par}
\end{minipage}

\par\medskip

\begin{minipage}{0.485\textwidth}
\centering
\begin{tikzpicture}[x={(1cm,0cm)},y={(0.48cm,0.19cm)},z={(0cm,0.95cm)},>=Latex,scale=5, every node/.style={font=\footnotesize}]
  \draw[->,line width=0.35pt] (0,0,0) -- (0.86,0,0) node[right,font=\small] {$f_1$};
  \draw[->,line width=0.35pt] (0,0,0) -- (0,0.84,0) node[above right,font=\small] {$f_2$};
  \draw[->,line width=0.35pt] (0,0,0) -- (0,0,0.86) node[above,font=\small] {$f_3$};
  \fill[black] (0,0,0) circle (0.7pt);
  \node[below left] at (0,0,0) {$z^+$};

  \coordinate (p1) at (0.12,0.68,0.64);
  \coordinate (p2) at (0.21,0.54,0.52);
  \coordinate (p3) at (0.35,0.41,0.40);
  \coordinate (p4) at (0.52,0.29,0.28);
  \coordinate (p5) at (0.70,0.18,0.20);
  \coordinate (p6) at (0.27,0.30,0.62);
  \coordinate (p7) at (0.45,0.22,0.45);
  \coordinate (p8) at (0.37,0.50,0.25);


  \foreach \P/\Px/\Py/\lab/\col/\anch in {
    p1/0.12/0.68/$p_1$/violet!80!black/east,
    p2/0.21/0.54/$p_2$/blue!70!black/east,
    p3/0.35/0.41/$p_3$/orange!90!black/south,
    p4/0.52/0.29/$p_4$/green!60!black/west,
    p5/0.70/0.18/$p_5$/gray!80!black/west}{
    \fill[\col] (\P) circle (0.3pt);
    \draw[\col!70,densely dashed,line width=0.5pt] (\P)--(\Px,\Py,0);
    \node[inner sep=1pt,anchor=\anch] at (\P) {\lab};
  }
\end{tikzpicture}
\par\smallskip
{\small\textbf{(c)} Three-objective Pareto-front approximation\par}
\end{minipage}\hfill
\begin{minipage}{0.485\textwidth}
\centering
\resizebox{\linewidth}{!}{%
\input{triangular_mixture_plot_body.tex}%
}
\par\smallskip
{\small\textbf{(d)} Three-objective Tchebycheff-shadow surface\par}
\end{minipage}
\caption{Tchebycheff-shadow interpretation of the integral $\Rtwo$ indicator. Panels (a) and (b) reproduce the biobjective viewpoint used in the companion two-objective report: a finite Pareto-front approximation induces one shadow function per point over the weight interval, and the integral $\Rtwo$ value is the area under the lower envelope. Panels (c) and (d) show the analogous three-objective picture. A finite Pareto-front approximation now lives in objective space $\mathbb{R}^3$, the weight domain is the simplex $\Delta_2$, and the lower envelope becomes a piecewise-linear surface over that simplex. Panel (d) presents a triangular mixture plot of this surface: the color gradient indicates the height $\tau_S(w)$, contour lines visualize level sets, and the marked points $\kappa_1,\kappa_2,\kappa_3$ indicate representative kink points where the active Tchebycheff shadow changes. The integral $\Rtwo$ value is the volume under this surface over the simplex.}
\label{fig:tchebycheff-shadow-2d-3d}
\end{figure}

The two-objective case has substantially more order structure than the three-objective case.  A biobjective Pareto-front approximation can be sorted in one coordinate, after which the other coordinate is sorted in the reverse order.  This one-dimensional order yields an adjacent-neighbor decomposition of the integral $\Rtwo$ value and leads to exact polynomial-time dynamic-programming and matrix-search algorithms for fixed-cardinality subset selection~\cite{emmerich2026biobjective}.  In three objectives, the weight domain is the triangle $\DeltaTwo$ and the lower envelope has a two-dimensional cell structure.  The ordered-predecessor state used in the biobjective algorithms no longer applies, and the exact complexity of subset selection must be reconsidered.

The kink points highlighted in Figure~\ref{fig:tchebycheff-shadow-2d-3d}(d) are geometric signatures of the piecewise-linear envelope: they occur where the active supporting Tchebycheff shadow changes, and they help visualize why the three-objective envelope has a genuinely two-dimensional cell structure.

This paper gives a compact algorithmic picture for the three-objective problem.  The first contribution is an approximation result.  We show that, for any fixed baseline set $B$, the integral $\Rtwo$ improvement
\[
 I_B(S)=\Rtwo(B)-\Rtwo(B\cup S)
\]
is a monotone submodular set function.  This follows from the Tchebycheff-shadow representation: for every weight vector, the improvement is a maximum of nonnegative gains over the selected points, and integration preserves submodularity.  The standard greedy algorithm therefore obtains the classical $(1-1/e)$ guarantee for maximizing $I_B$ under a cardinality constraint.  For the ordinary ideal-point based $\Rtwo$ indicator, with the ideal point and weight distribution fixed, Theorem~\ref{thm:ideal-r2-greedy-gap} states the same guarantee directly in terms of the original $\Rtwo$ values: greedy closes at least a $(1-1/e)$-fraction of the maximum possible gap between a fixed dominated anchor value and the best cardinality-$k$ $\Rtwo$ value.  Equivalently, the residual gap to the optimum is at most $1/e$ of this initial anchor-to-optimum gap.

The second contribution is a hardness result.  We prove that exact fixed-cardinality integral-$\Rtwo$ subset selection is NP-hard already in three objectives.  The proof uses a perspective transformation from scalarization space to reciprocal objective space.  Under this transformation, Tchebycheff-shadow improvements become weighted anchored-box gains with density $(x_1+x_2+x_3)^{-4}$.  We then adapt the three-dimensional anchored-box construction of Bringmann, Cabello, and Emmerich~\cite{bringmann2018anchored}, using a triangular-grid independent-set gadget and showing that the required equal-gain and conflict-gap properties survive under the weighted measure.

The remainder of the paper is organized as follows.  Section~\ref{sec:related-work} discusses related work and the research gap addressed here.  Section~\ref{sec:preliminaries} defines integral $\Rtwo$ values, improvements, and Tchebycheff shadows.  Section~\ref{sec:submodularity} proves monotone submodularity, the greedy approximation guarantee, and its ideal-point based $\Rtwo$ gap-reduction form.  Section~\ref{sec:perspective} introduces the perspective representation by weighted anchored boxes.  Section~\ref{sec:hardness} proves NP-hardness in three objectives.  Section~\ref{sec:evaluation} discusses exact evaluation by arrangements in fixed dimension three, and Section~\ref{sec:algorithms} summarizes algorithmic consequences and practical solution directions.

\section{Related work and research gap}\label{sec:related-work}

The $\Rtwo$ indicator was introduced as a scalarization-based quality indicator for approximating the nondominated set and has often been used through a finite sample of weight vectors.  Recent work has clarified the importance of the continuous, exact, or integral version.  Sch\"apermeier and Kerschke showed that integrating over a continuum of Tchebycheff scalarizing functions yields a Pareto-compliant integral $\Rtwo$ indicator in the biobjective case~\cite{schaepermeier2024integral,schaepermeier2025integral}.  The biobjective subset-selection problem for this exact integral indicator is solved in~\cite{emmerich2026biobjective}, where the one-dimensional weight interval and the sorted Pareto-front order lead to exact dynamic programming and matrix-search algorithms.

Jaszkiewicz and Zielniewicz~\cite{jaszkiewicz2025exact} study exact calculation and properties of the $\Rtwo$ multiobjective quality indicator in general dimension.  They prove strict monotonicity when the indicator is calculated exactly and the reference point strongly dominates the evaluated set, propose an $L_\infty$ normalization of weight vectors, and adapt the improved quick hypervolume algorithm to obtain a Quick $\Rtwo$ (QR2) exact-calculation algorithm.  Their computation is based on diagonal lines of weighted Chebycheff functions, intersections with the upper boundary of the dominated region, and decompositions into hypercuboids.  Although the terminology of Tchebycheff shadows is not used there, this diagonal-line and boundary-intersection view is closely related to the lower-envelope/shadow representation used here: both describe the same continuous scalarization envelope, but from different geometric coordinates.

The research gap addressed in the present report is the tractability of the subset selection problem.  Whereas exact-calculation algorithms answer the question ``what is the integral $\Rtwo$ value of a given finite set?''  The fixed-cardinality subset-selection problem asks the optimization question ``which $k$ points should be retained?''  The two-dimensional report~\cite{emmerich2026biobjective} shows that this optimization problem is polynomial-time solvable in the biobjective case because the lower envelope has a one-dimensional order.  The present report shows that this tractability does not extend to three objectives: exact subset selection becomes NP-hard.  At the same time, the Tchebycheff-shadow representation shows that the improvement objective is monotone submodular, yielding a principled greedy approximation guarantee.  Thus the report connects the exact-calculation literature on integral $\Rtwo$ with the algorithmic questions of hardness and approximation for fixed-cardinality subset selection.

\section{Preliminaries and problem setting}\label{sec:preliminaries}

We use a minimization convention.  The utopian point is translated to the origin and all objective vectors are assumed to lie in $\RR^3_{>0}$.  A finite Pareto-front approximation is denoted by
\[
P=\{p^1,\ldots,p^n\}\subset \RR^3_{>0}.
\]
The notation does not require that all points are mutually nondominated, but this is the intended case.

\begin{definition}[Weighted Tchebycheff scalarization]
For $p=(p_1,p_2,p_3)\in\RR^3_{>0}$ and $w\in\DeltaTwo$, define
\[
 g_p(w)=\max\{w_1p_1,w_2p_2,w_3p_3\}.
\]
For a nonempty finite set $S\subseteq P$, define its Tchebycheff envelope by
\[
 \tau_S(w)=\min_{p\in S}g_p(w).
\]
\end{definition}

\begin{definition}[Integral $\Rtwo$ value]
The unnormalized integral $\Rtwo$ value of a nonempty set $S$ is
\[
 \Rtwo(S)=\int_{\DeltaTwo}\tau_S(w)\dd w.
\]
Here $\dd w$ denotes two-dimensional Lebesgue measure on the simplex.  A normalized convention divides by $\vol(\DeltaTwo)$; this positive constant has no effect on subset selection or approximation ratios.
\end{definition}

\begin{problem}[Three-objective integral $\Rtwo$ subset selection]
Given $P\subset\RR^3_{>0}$ and an integer $k$, find
\[
 S^\star\in\argmin\{\Rtwo(S):S\subseteq P,\ |S|=k\}.
\]
\end{problem}

\subsection{Baseline improvements}

For approximation and submodularity results it is convenient to work with improvements relative to a fixed baseline set $B$.  The baseline can be a current incumbent approximation, or a single dominated anchor point $r$ that is worse than all candidate points.  If $r_i\geq p_i$ for all candidates $p\in P$ and all coordinates $i$, then $\tau_{\{r\}\cup S}=\tau_S$ for every nonempty $S$.  Hence minimizing $\Rtwo(S)$ is equivalent to maximizing $\Rtwo(\{r\})-\Rtwo(\{r\}\cup S)$.

\begin{definition}[Integral $\Rtwo$ improvement]
For a fixed baseline set $B$ and a candidate subset $S\subseteq P$, define
\[
 I_B(S)=\Rtwo(B)-\Rtwo(B\cup S).
\]
\end{definition}

\begin{definition}[Tchebycheff shadow]
For a baseline $B$ and a candidate point $p\in P$, define the Tchebycheff shadow of $p$ below the baseline envelope as
\[
 E_p^B=\{(w,t):w\in\DeltaTwo,\ g_p(w)\leq t<\tau_B(w)\}.
\]
If $g_p(w)\geq \tau_B(w)$, the vertical fibre above $w$ is empty.
\end{definition}

\begin{theorem}[Scalarization-space volume representation]\label{thm:shadow-volume}
For every finite $S\subseteq P$,
\[
 I_B(S)=\vol\left(\bigcup_{p\in S}E_p^B\right),
\]
where the volume is taken in $\DeltaTwo\times\RR$.
\end{theorem}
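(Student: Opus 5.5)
The plan is to prove the identity fibrewise over the weight simplex and then integrate, using Fubini/Tonelli on $\DeltaTwo\times\RR$. Fix a weight vector $w\in\DeltaTwo$. By definition, $\tau_{B\cup S}(w)=\min\{\tau_B(w),\tau_S(w)\}$ (with the convention $\tau_\emptyset\equiv+\infty$, so that $S=\emptyset$ gives $I_B(\emptyset)=0$ and the empty union). Hence the pointwise improvement is
\[
 \tau_B(w)-\tau_{B\cup S}(w)=\max\Bigl\{0,\ \max_{p\in S}\bigl(\tau_B(w)-g_p(w)\bigr)\Bigr\}=\max_{p\in S}\bigl(\tau_B(w)-g_p(w)\bigr)^+ .
\]

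Next I compute the vertical fibre of the union above $w$. For each $p$, the fibre of $E_p^B$ is the half-open interval $[g_p(w),\tau_B(w))$, which is empty exactly when $g_p(w)\ge\tau_B(w)$. A union of intervals sharing the common right endpoint $\tau_B(w)$ is the single interval $[\min_{p\in S}g_p(w),\tau_B(w)) = [\tau_S(w),\tau_B(w))$, or empty. Its one-dimensional length is therefore $(\tau_B(w)-\tau_S(w))^+$, which matches the pointwise improvement computed above.

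It remains to integrate over $w$. The sets $E_p^B$ are Borel, since they are defined by inequalities between continuous functions: $g_p$ is continuous, and $\tau_B$ is a finite minimum of continuous functions. The fibre-length function is nonnegative and measurable, so Tonelli gives
\[
 \vol\Bigl(\bigcup_{p\in S}E_p^B\Bigr)=\int_{\DeltaTwo}\bigl(\tau_B(w)-\tau_{B\cup S}(w)\bigr)\dd w .
\]
By linearity this equals $\Rtwo(B)-\Rtwo(B\cup S)=I_B(S)$. Linearity applies because both integrals are finite, as $\tau_B$ is bounded on the compact simplex for nonempty $B$.

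The only delicate point is bookkeeping rather than substance. One must handle the conventions ($B$ nonempty, $S$ possibly empty, and weights on the boundary of $\DeltaTwo$ where some $w_i=0$, which are of measure zero anyway), and justify the interchange of fibre length and integration. I expect no real obstacle; the key observation is that all fibres share the top endpoint $\tau_B(w)$, so the union is again an interval.
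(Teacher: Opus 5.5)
Your proposal is correct and follows essentially the same route as the paper: write the pointwise improvement as $\max_{p\in S}(\tau_B(w)-g_p(w))_+$, identify it with the length of the union of the vertical fibres $[g_p(w),\tau_B(w))$, and integrate over $\DeltaTwo$. Your remarks on the common right endpoint, on measurability and Tonelli, and on the empty-set and nonempty-$B$ conventions make explicit details that the paper leaves implicit.
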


\begin{proof}
For a fixed weight vector $w$, the baseline scalarization value is $\tau_B(w)$ and the new envelope is
\[
 \tau_{B\cup S}(w)=\min\left\{\tau_B(w),\min_{p\in S}g_p(w)\right\}.
\]
The pointwise improvement is therefore
\[
 \tau_B(w)-\tau_{B\cup S}(w)
 =\max_{p\in S}\bigl(\tau_B(w)-g_p(w)\bigr)_+.
\]
The right-hand side is precisely the length of the union of the vertical intervals
\[
 [g_p(w),\tau_B(w))\quad\text{for }p\in S
\]
that are nonempty.  Integrating these fibre lengths over $w\in\DeltaTwo$ gives the volume of the union of the shadows.
\end{proof}

Theorem~\ref{thm:shadow-volume} is the geometric bridge used throughout the report.  It is analogous to dominated-volume representations of the hypervolume indicator, but the volume now lives in scalarization space rather than directly as ordinary objective-space dominated volume.

\section{Submodularity and greedy approximation}\label{sec:submodularity}

The shadow representation immediately gives a positive algorithmic result.  Written as an improvement problem, fixed-cardinality integral-$\Rtwo$ subset selection is an instance of monotone submodular maximization.

\begin{lemma}[Pointwise coverage form]\label{lem:pointwise-submodular}
For each fixed $w\in\DeltaTwo$, the function
\[
 F_w(S)=\max_{p\in S}\bigl(\tau_B(w)-g_p(w)\bigr)_+
\]
is monotone nondecreasing and submodular in $S$.
\end{lemma}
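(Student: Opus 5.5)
Fix $w\in\DeltaTwo$ and abbreviate the nonnegative weights $a_p=(\tau_B(w)-g_p(w))_+$ for $p\in P$. Then $F_w(S)=\max_{p\in S}a_p$, with the convention $F_w(\emptyset)=0$. This convention is consistent with the improvement $I_B$, since adding no points gives zero pointwise improvement, and all $a_p\ge 0$. The plan is to show directly that a maximum of nonnegative weights is a monotone submodular set function. This is the classical ``max-coverage'' (facility-location with one client) function.

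\emph{Monotonicity.} For $S\subseteq T$, the maximum over $T$ ranges over a superset of the values in the maximum over $S$, so $F_w(S)\le F_w(T)$. If $S=\emptyset$, this uses $a_p\ge 0$.

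\emph{Submodularity.} I would verify the diminishing-returns form: for $S\subseteq T$ and $x\notin T$,
\[
F_w(S\cup\{x\})-F_w(S)\ \ge\ F_w(T\cup\{x\})-F_w(T).
\]
Write $m_S=F_w(S)$ and $m_T=F_w(T)$, so that $0\le m_S\le m_T$. Each marginal gain equals $(a_x-m)_+$, because $\max\{m,a_x\}-m=(a_x-m)_+$. The map $m\mapsto (a_x-m)_+$ is nonincreasing, so $m_S\le m_T$ gives $(a_x-m_S)_+\ge (a_x-m_T)_+$, which is the required inequality.

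There is no real obstacle here. The only point needing care is the empty set and the positive-part convention, both of which are handled by the nonnegativity of the $a_p$. Alternatively, one can see $F_w(S)$ as the one-dimensional Lebesgue measure of $\bigcup_{p\in S}[g_p(w),\tau_B(w))$, exactly as in the proof of Theorem~\ref{thm:shadow-volume}. That is a coverage function, and coverage functions are monotone submodular. This viewpoint also sets up integrating over $w$ in the subsequent result.
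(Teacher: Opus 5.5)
Your proof is correct and follows the paper's argument essentially verbatim: you write $F_w$ as a maximum of the nonnegative values $a_p=(\tau_B(w)-g_p(w))_+$ with $F_w(\emptyset)=0$. You then show that the marginal gain $(a_x-F_w(S))_+$ is nonincreasing in $F_w(S)$, which by monotonicity gives diminishing returns.
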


\begin{proof}
Let $a_p(w)=(\tau_B(w)-g_p(w))_+\geq 0$.  Then $F_w(S)=\max_{p\in S}a_p(w)$, with $F_w(\emptyset)=0$.  Monotonicity is immediate.  For submodularity, take $A\subseteq C\subseteq P$ and $q\notin C$.  The marginal gain is
\[
 F_w(A\cup\{q\})-F_w(A)=\max\{0,a_q(w)-F_w(A)\}.
\]
Since $F_w(A)\leq F_w(C)$, this marginal gain is at least
\[
 \max\{0,a_q(w)-F_w(C)\}=F_w(C\cup\{q\})-F_w(C).
\]
This is the diminishing-returns condition.
\end{proof}

\begin{theorem}[Submodularity of integral $\Rtwo$ improvement]\label{thm:submodularity}
For every fixed baseline set $B$, the set function $I_B:2^P\to\RR_{\geq 0}$ is monotone nondecreasing and submodular.
\end{theorem}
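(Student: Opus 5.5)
The plan is to write $I_B$ as an integral of the pointwise functions $F_w$ from Lemma~\ref{lem:pointwise-submodular}. Monotonicity and submodularity then transfer from the integrand to the integral, because both are linear inequalities among finitely many set-function values, and integrating against the nonnegative measure $\dd w$ preserves them.

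First, I establish the identity
\[
 I_B(S)=\int_{\DeltaTwo}F_w(S)\dd w\qquad\text{for every } S\subseteq P.
\]
The proof of Theorem~\ref{thm:shadow-volume} already gives the pointwise relation $\tau_B(w)-\tau_{B\cup S}(w)=\max_{p\in S}(\tau_B(w)-g_p(w))_+=F_w(S)$. Integrating it over $\DeltaTwo$ and using linearity of the integral gives $\Rtwo(B)-\Rtwo(B\cup S)$ on the left. For $S=\emptyset$ both sides vanish, consistent with the convention $F_w(\emptyset)=0$.

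There is a small technical point to settle: every integral must be finite, so that subtracting them is legitimate. If $B$ is nonempty, $\tau_B$ is a minimum of finitely many continuous functions $g_p$. Each $g_p$ is bounded on the compact simplex by $\max_i p_i$, so $\tau_B$ is continuous and bounded, and $0\le F_w(S)\le\tau_B(w)$ is integrable. If $B=\emptyset$ were allowed, $\tau_B$ would be undefined (or $+\infty$), so I would state that $B$ is nonempty, or that it contains the anchor point $r$ as in the baseline discussion. Measurability is immediate from continuity.

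Second, I verify the two properties. For monotonicity, take $A\subseteq C$. Lemma~\ref{lem:pointwise-submodular} gives $F_w(A)\le F_w(C)$ for every $w$, and integrating yields $I_B(A)\le I_B(C)$. Nonnegativity follows from $I_B(\emptyset)=0$ together with monotonicity, which justifies the codomain $\RR_{\ge0}$. For submodularity, take $A\subseteq C\subseteq P$ and $q\notin C$. The lemma gives, for every $w$,
\[
 F_w(A\cup\{q\})-F_w(A)\ \ge\ F_w(C\cup\{q\})-F_w(C).
\]
Integrating this over $\DeltaTwo$ and applying the identity from the first step to each of the four terms gives
\[
 I_B(A\cup\{q\})-I_B(A)\ \ge\ I_B(C\cup\{q\})-I_B(C).
\]

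I do not expect a real obstacle. The only step needing care is the integrability and nonemptiness of $B$ discussed above, which guarantees that the difference of integrals is well defined.
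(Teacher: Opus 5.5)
Your proposal is correct and matches the paper's proof: both write $I_B(S)=\int_{\DeltaTwo}F_w(S)\dd w$ using the pointwise identity from Theorem~\ref{thm:shadow-volume}, then carry monotonicity and submodularity over from Lemma~\ref{lem:pointwise-submodular} by integrating the defining inequalities. Your additional remarks on finiteness of the integrals and the implicit requirement that $B$ be nonempty are sound refinements that the paper leaves unstated.
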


\begin{proof}
By the scalarization-space volume representation,
\[
 I_B(S)=\int_{\DeltaTwo}F_w(S)\dd w.
\]
A nonnegative linear combination, and therefore an integral, of monotone submodular functions is monotone submodular.
\end{proof}

\begin{corollary}[Supermodularity of the raw integral $\Rtwo$ value]
The raw value $S\mapsto \Rtwo(B\cup S)$ is monotone nonincreasing and supermodular.
\end{corollary}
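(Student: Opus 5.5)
The plan is to read the corollary off the identity $\Rtwo(B\cup S)=\Rtwo(B)-I_B(S)$, which is just the definition of $I_B$ rearranged, and then invoke Theorem~\ref{thm:submodularity}. Since $\Rtwo(B)$ is a constant independent of $S$, the function $S\mapsto \Rtwo(B\cup S)$ is a constant minus a monotone nondecreasing submodular function. Negation reverses both properties: monotone nondecreasing becomes nonincreasing, and submodular becomes supermodular. Adding a constant changes neither.

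Concretely, I will first take $S\subseteq T\subseteq P$ and note that $I_B(S)\le I_B(T)$ gives $\Rtwo(B\cup S)\ge \Rtwo(B\cup T)$. Next, for $A\subseteq C\subseteq P$ and $q\notin C$, the diminishing-returns inequality
\[
I_B(A\cup\{q\})-I_B(A)\;\ge\; I_B(C\cup\{q\})-I_B(C)
\]
becomes, after substituting $I_B(\cdot)=\Rtwo(B)-\Rtwo(B\cup\cdot)$,
\[
\Rtwo(B\cup A\cup\{q\})-\Rtwo(B\cup A)\;\le\;\Rtwo(B\cup C\cup\{q\})-\Rtwo(B\cup C).
\]
This is the increasing-differences form of supermodularity. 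The constant $\Rtwo(B)$ cancels in each marginal difference.

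The only point needing care is well-definedness when $B=\emptyset$ and $S=\emptyset$, because $\tau_\emptyset$ is undefined. I would state the corollary for a nonempty baseline $B$, where every $\Rtwo(B\cup S)$ is finite; this is the setting used in the definition of $I_B$. Alternatively, one can adopt the convention $\tau_\emptyset\equiv+\infty$ and restrict attention to nonempty sets. Since $\Rtwo(B\cup S)$ need not equal $\Rtwo(S)$ unless the baseline is a dominated anchor as in the baseline discussion, there is no further subtlety beyond this. I do not expect any genuine obstacle: the argument is a direct sign flip of Theorem~\ref{thm:submodularity}.
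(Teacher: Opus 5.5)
Your proposal is correct and matches the paper's proof: the paper also rewrites $\Rtwo(B\cup S)=\Rtwo(B)-I_B(S)$ and flips the signs in Theorem~\ref{thm:submodularity}. Your explicit marginal-difference computation and your remark that $B$ should be nonempty (so that $\tau_B$ and $I_B$ are finite) fill in details the paper leaves implicit.
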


\begin{proof}
The identity $\Rtwo(B\cup S)=\Rtwo(B)-I_B(S)$ converts monotone submodularity of $I_B$ into monotone nonincreasing supermodularity.
\end{proof}

\begin{theorem}[Greedy approximation]\label{thm:greedy}
Let $S_g$ be the set returned by the greedy algorithm that starts with $S_0=\emptyset$ and repeatedly adds a point of maximum marginal improvement
\[
 p_t\in\argmax_{p\in P\setminus S_{t-1}}\left(I_B(S_{t-1}\cup\{p\})-I_B(S_{t-1})\right)
\]
until $|S_g|=k$.  Then
\[
 I_B(S_g)\geq \left(1-\frac{1}{e}\right)\max_{|S|\leq k}I_B(S).
\]
\end{theorem}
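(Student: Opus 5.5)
The plan is to invoke the classical Nemhauser--Wolsey--Fisher argument, using only the properties of $I_B$ already established in Theorem~\ref{thm:submodularity}. These are monotonicity and submodularity, plus normalization $I_B(\emptyset)=\Rtwo(B)-\Rtwo(B)=0$.

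Let $S^\star$ attain $\max_{|S|\leq k}I_B(S)$ and write $\mathrm{OPT}=I_B(S^\star)$. By monotonicity we may assume $|S^\star|=k$ whenever $|P|\geq k$, padding with arbitrary points if necessary. The key step is a per-iteration inequality. For each $t=1,\ldots,k$, monotonicity gives
\[
\mathrm{OPT}\leq I_B(S_{t-1}\cup S^\star).
\]
Adding the elements $q_1,\ldots,q_k$ of $S^\star\setminus S_{t-1}$ one at a time, telescoping, and applying submodularity to each increment (the diminishing-returns form used in Lemma~\ref{lem:pointwise-submodular}, now for $I_B$) bounds this by
\[
I_B(S_{t-1})+\sum_{q\in S^\star\setminus S_{t-1}}\bigl(I_B(S_{t-1}\cup\{q\})-I_B(S_{t-1})\bigr).
\]
Each summand is at most the greedy marginal gain $\delta_t=I_B(S_t)-I_B(S_{t-1})$, because $p_t$ maximizes the marginal gain over $P\setminus S_{t-1}$. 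There are at most $k$ summands, so
\[
\mathrm{OPT}-I_B(S_{t-1})\leq k\,\delta_t.
\]

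Setting $\Delta_t=\mathrm{OPT}-I_B(S_t)$, this reads $\Delta_t\leq(1-1/k)\Delta_{t-1}$. Iterating from $\Delta_0=\mathrm{OPT}$, using $I_B(\emptyset)=0$, gives
\[
\Delta_k\leq(1-1/k)^k\,\mathrm{OPT}\leq e^{-1}\,\mathrm{OPT},
\]
which rearranges to the claimed bound.

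There is no real obstacle, only bookkeeping on a few edge cases:
\begin{itemize}
\item If $|P|<k$, the greedy loop simply takes all of $P$, which is optimal.
\item Ties in the $\argmax$ are harmless, since the argument uses only the maximality of $\delta_t$.
\item Marginal gains are nonnegative by monotonicity, so the telescoping bound is valid even when $S^\star\cap S_{t-1}\neq\emptyset$.
\end{itemize}
The only point requiring care is making the inequality $\mathrm{OPT}-I_B(S_{t-1})\leq k\,\delta_t$ explicit through the telescoping sum. Everything else follows from Theorem~\ref{thm:submodularity}.
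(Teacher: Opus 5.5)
Your proposal is correct and takes the same route as the paper. The paper invokes the Nemhauser--Wolsey--Fisher theorem, with monotonicity and submodularity supplied by Theorem~\ref{thm:submodularity}; you instead write out the standard NWF argument and make explicit the normalization $I_B(\emptyset)=0$ and the assumption $k\leq|P|$, both of which the paper's one-line citation leaves implicit.
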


\begin{proof}
This is the classical Nemhauser--Wolsey--Fisher theorem for maximizing a monotone submodular function under a cardinality constraint~\cite{nemhauser1978analysis}.  Theorem~\ref{thm:submodularity} verifies the hypotheses.
\end{proof}

\subsection{Ideal-point based \texorpdfstring{$\Rtwo$}{R2} gap guarantee}\label{subsec:ideal-r2-guarantee}

The preceding theorem is stated for a general baseline set.  For the original subset-selection problem it is useful to express the same bound directly in terms of the ideal-point based $\Rtwo$ value.  Throughout this paper the ideal point is fixed and translated to the origin, so the scalarization functions are $g_p(w)=\max_i w_i p_i$.  If the ideal point is denoted explicitly by $z^{\mathrm{id}}$, then $p_i$ should be read as the shifted loss $f_i(p)-z_i^{\mathrm{id}}$.  The fixed dominated anchor in the next theorem is only a normalization level for converting the decreasing $\Rtwo$ value into a nonnegative gain; it is not a baseline approximation set.

\begin{theorem}[Greedy gap guarantee for ideal-point based $\Rtwo$]\label{thm:ideal-r2-greedy-gap}
Let $P\subset\RR^d_{>0}$ be a finite candidate set.  Fix the ideal point, the weight domain, and the weight distribution used in the Tchebycheff losses.  Let $\bar p\in\RR^d_{>0}$ be a fixed dominated anchor satisfying
\[
 g_{\bar p}(w)\geq g_p(w)\qquad\text{for all }p\in P\text{ and all weights }w.
\]
For every nonempty $S\subseteq P$, define the ideal-point based $\Rtwo$ value by
\[
 \Rtwo(S)=\int_{\Delta_{d-1}}\min_{p\in S}g_p(w)\dd w,
 \qquad
 g_p(w)=\max_{i=1,\ldots,d}w_ip_i,
\]
and set
\[
 R_0:=\Rtwo(\{\bar p\}),
 \qquad
 Q_{\bar p}(S):=R_0-\Rtwo(S),\qquad S\neq\emptyset,
\]
with $Q_{\bar p}(\emptyset)=0$.  Let $G_k$ be the subset obtained by the greedy algorithm that starts from the empty set and, at each step, adds a point maximizing the marginal increase of $Q_{\bar p}$.  Let
\[
 S_k^\star\in\argmin\{\Rtwo(S):S\subseteq P,\ |S|=k\}
\]
be an optimal cardinality-$k$ subset.  Then
\[
 R_0-\Rtwo(G_k)
 \geq
 \left(1-\frac1e\right)\bigl(R_0-\Rtwo(S_k^\star)\bigr).
\]
Equivalently, if $R_0>\Rtwo(S_k^\star)$,
\[
 \frac{R_0-\Rtwo(G_k)}{R_0-\Rtwo(S_k^\star)}
 \geq 1-\frac1e.
\]
In residual-gap form,
\[
 \Rtwo(G_k)-\Rtwo(S_k^\star)
 \leq
 \frac1e\bigl(R_0-\Rtwo(S_k^\star)\bigr),
\]
and equivalently,
\[
 \Rtwo(G_k)
 \leq
 \left(1-\frac1e\right)\Rtwo(S_k^\star)+\frac1e R_0.
\]
Thus, for a fixed ideal point and fixed weight distribution, greedy closes at least a $(1-1/e)$-fraction of the maximum possible $\Rtwo$ gap between the fixed anchor value $R_0$ and the optimal cardinality-$k$ value.
\end{theorem}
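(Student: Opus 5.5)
The plan is to reduce the statement to Theorem~\ref{thm:greedy} by identifying $Q_{\bar p}$ with the baseline improvement $I_B$ for the singleton baseline $B=\{\bar p\}$. Here $d$ is general, but the proofs of Theorem~\ref{thm:shadow-volume}, Lemma~\ref{lem:pointwise-submodular} and Theorem~\ref{thm:submodularity} never use $d=3$. They work verbatim over $\Delta_{d-1}$ with any fixed finite measure $\dd w$, since they only use pointwise minima and the linearity and monotonicity of integration. I will note this explicitly, so that $I_{\{\bar p\}}$ is monotone submodular in general dimension and for any fixed weight distribution.

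The key identity is $Q_{\bar p}(S)=I_{\{\bar p\}}(S)$ for all $S\subseteq P$. For $S=\emptyset$ both sides vanish. For nonempty $S$, the anchor hypothesis $g_{\bar p}(w)\geq g_p(w)$ for every $p\in S$ gives
\[
\tau_{\{\bar p\}\cup S}(w)=\min\{g_{\bar p}(w),\min_{p\in S}g_p(w)\}=\min_{p\in S}g_p(w)=\tau_S(w)
\]
pointwise. Hence $\Rtwo(\{\bar p\}\cup S)=\Rtwo(S)$ and $Q_{\bar p}(S)=R_0-\Rtwo(S)=I_{\{\bar p\}}(S)$. Consequently the greedy rule maximizing the marginal increase of $Q_{\bar p}$ is exactly the greedy rule of Theorem~\ref{thm:greedy} for $B=\{\bar p\}$. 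Any tie-breaking is allowed there, so $G_k=S_g$.

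Theorem~\ref{thm:greedy} then yields
\[
R_0-\Rtwo(G_k)=I_{\{\bar p\}}(G_k)\geq(1-1/e)\max_{|S|\leq k}I_{\{\bar p\}}(S).
\]
Since $S_k^\star$ is feasible with $|S_k^\star|=k$, the maximum is at least $I_{\{\bar p\}}(S_k^\star)=R_0-\Rtwo(S_k^\star)$, which gives the main inequality. One small point needs care: if $|P|<k$ the problem is infeasible, so I assume $k\leq|P|$ and $k\geq1$, making $G_k$ nonempty so that $\Rtwo(G_k)$ is defined. (In fact the maximum over $|S|\le k$ equals the value at $S_k^\star$ by monotonicity, though only the inequality is needed.)

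The remaining forms are algebraic rearrangements. Dividing by the positive quantity $R_0-\Rtwo(S_k^\star)$ gives the ratio form. Writing $R_0-\Rtwo(G_k)=(R_0-\Rtwo(S_k^\star))-(\Rtwo(G_k)-\Rtwo(S_k^\star))$ gives the residual form, and expanding $\Rtwo(S_k^\star)+\frac1e(R_0-\Rtwo(S_k^\star))$ gives the convex-combination form. The main obstacle is conceptual rather than technical: justifying that the earlier three-objective results transfer to general $d$ and general weight distributions. I will address it by pointing out that each step there is a pointwise-in-$w$ statement followed by integration against a nonnegative measure.
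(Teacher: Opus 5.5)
Your proof is correct and is essentially the paper's argument. With $B=\{\bar p\}$, the pointwise function $F_w$ of Lemma~\ref{lem:pointwise-submodular} is exactly the paper's $q_w(S)=\max_{p\in S}\bigl(g_{\bar p}(w)-g_p(w)\bigr)$, so routing through $I_{\{\bar p\}}$ and Theorem~\ref{thm:greedy}, after your remark that those results are dimension-free, amounts to the paper's direct pointwise-submodularity, integration, and Nemhauser--Wolsey--Fisher step.
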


\begin{proof}
For each fixed weight vector $w$, define
\[
 q_w(S)=g_{\bar p}(w)-\min_{p\in S}g_p(w),\qquad S\neq\emptyset,
\]
and set $q_w(\emptyset)=0$.  Since $g_{\bar p}(w)\geq g_p(w)$ for all candidates, we have
\[
 q_w(S)=\max_{p\in S}\bigl(g_{\bar p}(w)-g_p(w)\bigr).
\]
This is a maximum of nonnegative singleton gains.  It is monotone, and its marginal gain from adding a new point can only decrease as the selected set grows.  Hence $q_w$ is monotone submodular.  Integration over the fixed weight domain preserves monotonicity and submodularity, so
\[
 Q_{\bar p}(S)=\int q_w(S)\dd w=R_0-\Rtwo(S)
\]
is a nonnegative monotone submodular set function.  The Nemhauser--Wolsey--Fisher greedy theorem for cardinality-constrained monotone submodular maximization gives
\[
 Q_{\bar p}(G_k)\geq \left(1-\frac1e\right)Q_{\bar p}(S_k^\star).
\]
Substituting $Q_{\bar p}(S)=R_0-\Rtwo(S)$ gives the displayed equivalent forms.
\end{proof}

\begin{remark}[Why the normalization is needed]
The shift by the fixed anchor value $R_0$ is the natural normalization that turns the decreasing ideal-point based $\Rtwo$ objective into a nonnegative monotone gain.  The guarantee is therefore not a multiplicative approximation of the raw residual value $\Rtwo(S_k^\star)$ itself.  It is a gap-reduction guarantee for the original $\Rtwo$ values: the greedy residual gap to the optimal cardinality-$k$ value is at most $1/e$ of the initial anchor-to-optimum gap.  No universal bound on the ratio $\Rtwo(G_k)/\Rtwo(S_k^\star)$ follows from submodularity alone without an additional bound on $R_0/\Rtwo(S_k^\star)$.  The ideal point, the anchor, and the weight distribution must remain fixed throughout the selection process.  If the ideal point is recomputed for every selected subset, the scalarization functions of all points change and this submodularity argument no longer applies directly.
\end{remark}

\begin{pseudocode}{Greedy integral-$\Rtwo$ subset selection}
\begin{tabular}{ll}
\textbf{Input:} & candidate set $P$, baseline $B$, cardinality $k$ \\
\textbf{Output:} & subset $S\subseteq P$, $|S|=k$ \\
1. & $S\leftarrow\emptyset$ \\
2. & \textbf{for} $t=1,\ldots,k$ \textbf{do} \\
3. & \quad choose $p\in P\setminus S$ maximizing $I_B(S\cup\{p\})-I_B(S)$ \\
4. & \quad $S\leftarrow S\cup\{p\}$ \\
5. & \textbf{return} $S$.
\end{tabular}
\caption{Greedy selection is meaningful in any dimension once integral $\Rtwo$ is written as a fixed-envelope improvement.  With a dominated anchor $\bar p$ and $R_0=\Rtwo(\{\bar p\})$, the same iterations maximize the normalized ideal-point based gain $Q_{\bar p}(S)=R_0-\Rtwo(S)$.  Exact or approximate marginal evaluation can be supplied by quadrature, arrangement integration, or Monte Carlo estimation.}
\end{pseudocode}

\begin{remark}[Approximation target]
The approximation ratio applies to an attained reduction in $\Rtwo$, or equivalently to a normalized gain.  Theorem~\ref{thm:ideal-r2-greedy-gap} states this in terms of the ideal-point based $\Rtwo$ values themselves: greedy achieves a $(1-1/e)$-fraction of the maximum possible reduction from the fixed anchor value $R_0$ to the best cardinality-$k$ value, or equivalently leaves at most a $1/e$ residual anchor-to-optimum gap.  This should not be read as a multiplicative approximation of the small residual value $\Rtwo(S_k^\star)$ itself.
\end{remark}

\section{A perspective representation by weighted anchored boxes}\label{sec:perspective}

The NP-hardness proof uses a second geometric representation.  It maps the scalarization-space integral into a weighted anchored-box problem in reciprocal coordinates.  This is the point where the integral $\Rtwo$ geometry becomes comparable to the anchored-box formulation of hypervolume subset selection.

For $a=(a_1,a_2,a_3)\in\RR^3_{>0}$, let
\[
 \boxop(a)=[0,a_1]\times[0,a_2]\times[0,a_3].
\]
For a finite set $A\subset\RR^3_{>0}$, write
\[
 U(A)=\bigcup_{a\in A}\boxop(a).
\]
Define the density
\[
 \varphi(x)=\frac{1}{(x_1+x_2+x_3)^4},\qquad x\in\RR^3_{>0}.
\]
Although $\varphi$ is not integrable near the origin over arbitrary neighborhoods, all relative gain regions used below are bounded away from the origin and have finite measure.

\begin{theorem}[Perspective representation]\label{thm:perspective}
Let $S\subset\RR^3_{>0}$ be finite and nonempty.  For each objective vector $p\in S$, define the reciprocal box corner
\[
 b(p)=\left(\frac1{p_1},\frac1{p_2},\frac1{p_3}\right).
\]
Then
\[
 \Rtwo(S)
 =
 \int_{\RR^3_{>0}\setminus U(b(S))}
 \frac{\dd x}{(x_1+x_2+x_3)^4},
\]
where $b(S)=\{b(p):p\in S\}$.
Consequently, for a baseline set $B$ and candidate set $S$,
\[
 I_B(S)=
 \int_{U(b(B\cup S))\setminus U(b(B))}
 \frac{\dd x}{(x_1+x_2+x_3)^4}.
\]
\end{theorem}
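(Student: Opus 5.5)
The plan is to derive the identity from a layer-cake (Cavalieri) representation of the integral followed by a single perspective change of variables $(w,t)\mapsto x=w/t$. By Theorem~\ref{thm:shadow-volume}, or simply because $\tau_S\ge 0$, we can write
\[
 \Rtwo(S)=\int_{\DeltaTwo}\int_0^\infty \mathbf{1}\bigl[t<\tau_S(w)\bigr]\dd t\dd w ,
\]
which is the volume of the region $\{(w,t):w\in\DeltaTwo,\ 0<t<\tau_S(w)\}$ under the envelope.

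\textbf{Step 1: box-membership form of the fibre condition.} Fix $w$ and $t>0$. Then $t\ge\tau_S(w)$ holds iff there is some $p\in S$ with $w_ip_i\le t$ for all $i$. For that $p$ this says $w_i/t\le 1/p_i$ for all $i$, i.e.\ $w/t\in\boxop(b(p))$. Hence
\[
 t<\tau_S(w)\iff w/t\notin U(b(S)).
\]
Zero weights cause no trouble, since $w_i/t=0$ lies in every box interval.

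\textbf{Step 2: change of variables.} I will parametrize $\DeltaTwo$ by $(w_1,w_2)$ with $w_3=1-w_1-w_2$, and consider $\Phi(w_1,w_2,t)=u/t$ where $u=(w_1,w_2,1-w_1-w_2)$. This map is a bijection from $\operatorname{int}(\DeltaTwo)\times(0,\infty)$ onto $\RR^3_{>0}$. Its inverse is $t=1/s$ and $w=x/s$, where $s=x_1+x_2+x_3$.

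The Jacobian has columns $(1,0,-1)/t$, $(0,1,-1)/t$ and $-u/t^2$. Its determinant equals $-t^{-4}\bigl((1,0,-1)\times(0,1,-1)\bigr)\cdot u=-t^{-4}(1,1,1)\cdot u=-t^{-4}$. Therefore
\[
 \dd w\dd t=t^{4}\dd x=(x_1+x_2+x_3)^{-4}\dd x .
\]
The boundary of the simplex has measure zero and can be ignored. Substituting the indicator from Step~1 gives the first formula.

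One remark on normalization. If $\dd w$ is instead read as intrinsic surface measure on the simplex, a constant factor $\sqrt3$ appears. As the paper notes, such a positive constant is irrelevant, and I would state explicitly which convention the formula uses.

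\textbf{Step 3: the improvement formula.} Here I expect the main obstacle to be only a measure-theoretic one: ensuring finiteness so that the subtraction is legitimate. The integrand $s^{-4}$ is not integrable at the origin, but every point near the origin lies inside each box, so it is excluded. The complement $\RR^3_{>0}\setminus U(b(S))$ avoids a neighbourhood of the origin, and at infinity $s^{-4}$ is integrable in $\RR^3$. Both integrals therefore equal the finite quantities $\Rtwo(\cdot)$.

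Since $U(b(B))\subseteq U(b(B\cup S))$, the complement for $B\cup S$ is contained in the complement for $B$. Subtracting the two instances of the first formula (applied to $B$ and to $B\cup S$) leaves exactly the integral over $U(b(B\cup S))\setminus U(b(B))$, which equals $I_B(S)$. This region also lies away from the origin, since a small neighbourhood of the origin is inside $U(b(B))$, consistent with the earlier remark on finite relative gain regions.
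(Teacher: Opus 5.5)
Your proposal is correct and follows essentially the same route as the paper: write $\Rtwo(S)$ as the volume under $\tau_S$, substitute $x=w/t$ with Jacobian $t^{-4}=(x_1+x_2+x_3)^{-4}$, identify the fibre condition with non-membership in $U(b(S))$, and subtract the identities for $B$ and $B\cup S$. Your version is slightly tighter than the paper's on two details. Using the strict inequality $t<\tau_S(w)$ makes the box equivalence exact rather than true only up to a null set, and your $(w_1,w_2)$ parametrization of $\dd w$ agrees with the area-$1/2$ convention the paper uses in its appendix.
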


\begin{proof}
Write $\tau_S(w)=\min_{p\in S}g_p(w)$.  Since $\tau_S(w)>0$ on $\DeltaTwo$,
\[
 \Rtwo(S)=\int_{\DeltaTwo}\int_0^{\tau_S(w)}\dd t\dd w.
\]
Apply the change of variables
\[
 x_i=\frac{w_i}{t},\qquad i=1,2,3.
\]
Conversely,
\[
 t=\frac{1}{x_1+x_2+x_3},
 \qquad
 w_i=\frac{x_i}{x_1+x_2+x_3}.
\]
For a fixed point $p$, the inequality $t\leq g_p(w)$ is equivalent to
\[
 \frac{1}{x_1+x_2+x_3}
 \leq
 \max_i\frac{x_i}{x_1+x_2+x_3}p_i,
\]
which is equivalent to $1\leq\max_i x_ip_i$.  This means that $x\notin\boxop(b(p))$.  Thus $t\leq\tau_S(w)$ holds if and only if $x\notin U(b(S))$.

The absolute Jacobian determinant of the map $(w_1,w_2,t)\mapsto(x_1,x_2,x_3)$ is $t^{-4}$.  Hence
\[
 \dd w\dd t=t^4\dd x=\frac{\dd x}{(x_1+x_2+x_3)^4}.
\]
Substitution gives the first identity.  The second identity follows by subtracting the representation for $B\cup S$ from the representation for $B$.
\end{proof}

\begin{figure}[t]
\centering
\begin{tikzpicture}[>=Latex,scale=1]
  \node[draw,rounded corners,fill=blue!5,minimum width=5.2cm,minimum height=2.0cm,align=center] (left) at (0,0) {scalarization space\\$w\in\Delta_2$, height $t$\\Tchebycheff shadow};
  \node[draw,rounded corners,fill=orange!8,minimum width=5.2cm,minimum height=2.0cm,align=center] (right) at (9.0,0) {reciprocal space\\$x_i=w_i/t$\\weighted anchored-box gain};
  \draw[->,thick] (left) -- node[above,align=center] {perspective map\\$x_i=w_i/t$} (right);
  \node[align=center,font=\small] at (3.5,-1.55) {$\displaystyle \dd w\dd t=(x_1+x_2+x_3)^{-4}\dd x$};
\end{tikzpicture}
\caption{The perspective transformation converts Tchebycheff-shadow improvement into weighted anchored-box gain.  This is the key bridge from integral $\Rtwo$ to the three-dimensional anchored-box hardness construction.}
\label{fig:perspective}
\end{figure}

\begin{figure}[t]
\centering
\resizebox{\textwidth}{!}{\input{mapping_hv_r2_tikz_body.tex}}
\caption{The common triangular-grid gadget in the two coordinate systems used by the hardness proof. Left: a front view of the layer $x_1+x_2+x_3=m$ in reciprocal objective space. Boundary lattice points are gray; positive lattice points are black and correspond to finite reciprocal objective vectors. Right: the same positive lattice points after normalization $w_i=x_i/m$, shown in the simplex $\Delta_2$. The color field is the exact lower envelope of the corresponding Tchebycheff shadows, $\tau_S(w)=\min_{x\in P_m^+}\max_i w_i/x_i$, where $P_m^+=\{x\in\mathbb{N}_{>0}^3:x_1+x_2+x_3=m\}$. The highlighted points $p_1,\ldots,p_5$ are identical before and after the mapping.}
\label{fig:hv-r2-gadget}
\end{figure}

Figure~\ref{fig:hv-r2-gadget} makes explicit the normalization step used to compare the anchored-box lattice gadget with the Tchebycheff-shadow geometry: the same positive lattice points that support the reciprocal-space construction generate the exact triangular mixture plot on the simplex.  Appendix~\ref{app:perspective-verification} gives a short Python verification experiment for the pointwise map, the Jacobian factor, and the resulting weighted-box integral identity.

\section{NP-hardness in three objectives}\label{sec:hardness}

We now prove NP-hardness of exact fixed-cardinality integral-$\Rtwo$ subset selection in three objectives.  The proof follows the anchored-box construction of Bringmann, Cabello, and Emmerich~\cite{bringmann2018anchored}, with one additional observation: the density $\varphi(x)=(x_1+x_2+x_3)^{-4}$ is constant under translations that preserve the coordinate sum.  The difference regions in the anchored-box gadget are translates, in such sum-preserving directions, of a fixed finite union of boxes.  Hence the equal-contribution and equal-conflict-gap arguments survive under the weighted measure.

\subsection{The source problem}

Let $\Gamma$ be the infinite triangular grid.  The vertex set can be represented as
\[
 V(\Gamma)=\left\{\left(i+\frac{j}{2},\frac{\sqrt{3}}{2}j\right):i,j\in\mathbb{N}\right\},
\]
with edges between vertices at Euclidean distance one.  The following problem is NP-complete; it is used as the source problem in the three-dimensional anchored-box reduction~\cite{bringmann2018anchored}.  More specifically, Bringmann, Cabello, and Emmerich reduce to this problem from Independent Set on planar graphs of maximum degree at most three.  Their argument uses the complementarity with Vertex Cover on planar graphs of degree at most three~\cite{garey1977rectilinear}, then embeds such graphs in a polynomial-size square grid using standard planar-grid drawing results, for example Storer~\cite{storer1984minimal} or Tamassia and Tollis~\cite{tamassia1989planar}.  A shear map turns the square grid into a subgraph of the triangular grid, after which local scaling and rerouting make the representation induced and give the required path parities.

\begin{problem}[Independent Set on Induced Triangular Grid]
Given a finite induced subgraph $A$ of $\Gamma$ and an integer $\ell$, determine whether $A$ has an independent set of size $\ell$.
\end{problem}

\begin{figure}[t]
\centering
\begin{tikzpicture}[scale=0.95, every node/.style={font=\small}]
  \begin{scope}[shift={(0,0)}]
    \node[font=\bfseries] at (2.1,3.15) {(a) Small graph $A$};
    \coordinate (a) at (0.0,0.0);
    \coordinate (b) at (1.2,0.0);
    \coordinate (c) at (2.4,0.0);
    \coordinate (d) at (0.6,1.0);
    \coordinate (e) at (1.8,1.0);
    \coordinate (f) at (3.0,1.0);
    \coordinate (g) at (1.2,2.0);
    \coordinate (h) at (2.4,2.0);
    \foreach \u/\v in {a/b,b/c,d/e,e/f,g/h,a/d,b/d,b/e,c/e,c/f,d/g,e/g,e/h,f/h}{%
      \draw[orange!80!black,line width=1.0pt] (\u)--(\v);
    }
    \foreach \v/\lab in {a/$a$,b/$b$,c/$c$,d/$d$,e/$e$,f/$f$,g/$g$,h/$h$}{%
      \fill[orange!80!black] (\v) circle (4.0pt);
      \fill[white] (\v) circle (2.2pt);
      \node[above=2pt] at (\v) {\lab};
    }
    \foreach \v in {a,c,g}{%
      \fill[blue!70!black] (\v) circle (4.0pt);
      \draw[white,line width=0.7pt] (\v) circle (4.0pt);
    }
    \node[align=center,font=\scriptsize,fill=white,rounded corners=2pt,inner sep=2pt] at (1.5,-0.85)
      {blue vertices: feasible independent set};
  \end{scope}

  \begin{scope}[shift={(5.7,-0.05)}]
    \node[font=\bfseries] at (3.0,3.20) {(b) Encoding in the triangular grid};
    \foreach \j in {0,...,4}{%
      \foreach \i in {0,...,5}{%
        \coordinate (v\i-\j) at ({\i+0.5*\j},{0.866*\j});
      }
    }
    \foreach \j in {0,...,4}{%
      \foreach \i in {0,...,4}{\draw[gray!35] (v\i-\j)--(v\the\numexpr\i+1\relax-\j);} }
    \foreach \j in {0,...,3}{%
      \foreach \i in {0,...,5}{\draw[gray!35] (v\i-\j)--(v\i-\the\numexpr\j+1\relax);} }
    \foreach \j in {0,...,3}{%
      \foreach \i in {0,...,4}{\draw[gray!35] (v\the\numexpr\i+1\relax-\j)--(v\i-\the\numexpr\j+1\relax);} }
    \foreach \j in {0,...,4}{%
      \foreach \i in {0,...,5}{\fill[gray!60] (v\i-\j) circle (1.2pt);} }

    \coordinate (A) at (v0-0);
    \coordinate (B) at (v1-0);
    \coordinate (C) at (v2-0);
    \coordinate (D) at (v0-1);
    \coordinate (E) at (v1-1);
    \coordinate (F) at (v2-1);
    \coordinate (G) at (v0-2);
    \coordinate (H) at (v1-2);

    \foreach \u/\v in {A/B,B/C,D/E,E/F,G/H,A/D,B/D,B/E,C/E,C/F,D/G,E/G,E/H,F/H}{%
      \draw[orange!80!black,line width=1.2pt] (\u)--(\v);
    }

    \foreach \v/\lab in {A/$a$,B/$b$,C/$c$,D/$d$,E/$e$,F/$f$,G/$g$,H/$h$}{%
      \fill[orange!80!black] (\v) circle (4.0pt);
      \fill[white] (\v) circle (2.2pt);
      \node[above=2pt] at (\v) {\lab};
    }
    \foreach \v in {A,C,G}{%
      \fill[blue!70!black] (\v) circle (4.0pt);
      \draw[white,line width=0.7pt] (\v) circle (4.0pt);
    }

    \node[align=center,font=\scriptsize,fill=white,rounded corners=2pt,inner sep=2pt] at (3.1,-0.85)
      {orange edges encode adjacency in the source graph};
  \end{scope}
\end{tikzpicture}
\caption{A small induced-subgraph example for the hardness reduction, in the spirit of the triangular-grid construction of Bringmann, Cabello, and Emmerich. Left: the abstract source graph $A$, with vertices and edges shown explicitly. Right: the same graph embedded as an induced subgraph of the triangular grid $\Gamma$. The blue vertices illustrate a feasible  independent set, while the orange edges indicate those adjacencies whose conflicts are encoded by intersections of optional gain regions in the anchored-box construction.  In panel (b), every orange edge is a genuine triangular-grid edge.}
\label{fig:triangular-grid}
\end{figure}

\subsection{The anchored-box scaffold}

For an integer $m\geq 3$, define
\[
 P_m=\{(x,y,z)\in\mathbb{N}^3:x+y+z=m\}.
\]
These points form a triangular layer in the positive integer lattice.  Let
\[
 \eps=\frac{1}{100m^4}
\]
and, for each $p\in P_{m-1}$, define
\[
 q(p)=p+(\eps,\eps,\eps).
\]
Let
\[
 Q_m=\{q(p):p\in P_{m-1}\}.
\]
For $q=q(p)$, define the relative gain region
\[
 D(q)=U(P_m\cup\{q\})\setminus U(P_m).
\]
All these regions are bounded away from the origin, so their $\varphi$-weighted measure is finite.  We write
\[
 \muvarphi(X)=\int_X \varphi(x)\dd x
\]
for such bounded measurable sets $X$.

\begin{lemma}[Shape of an optional gain region]\label{lem:diff-shape}
Let $p=(p_1,p_2,p_3)\in P_{m-1}$ and $q=q(p)$.  Up to boundary sets of measure zero,
\[
D(q)=C(p)\cup L_{12}(p)\cup L_{13}(p)\cup L_{23}(p),
\]
where
\[
 C(p)=\prod_{i=1}^3(p_i,p_i+\eps]
\]
is a cube of side length $\eps$, and
\[
\begin{aligned}
L_{12}(p)&=(p_1,p_1+\eps]\times(p_2,p_2+\eps]\times(p_3-1,p_3],\\
L_{13}(p)&=(p_1,p_1+\eps]\times(p_2-1,p_2]\times(p_3,p_3+\eps],\\
L_{23}(p)&=(p_1-1,p_1]\times(p_2,p_2+\eps]\times(p_3,p_3+\eps].
\end{aligned}
\]
Thus $D(q)$ is the union of three thin boxes of size $\eps\times\eps\times 1$ and one cube of size $\eps^3$.
\end{lemma}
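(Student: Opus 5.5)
The plan is to describe $D(q)=\boxop(q)\setminus U(P_m)$ pointwise through the integer ceilings of the coordinates of $x$. Throughout, I work in $\RR^3_{>0}$ and discard boundary sets of measure zero. This covers points with some $x_i=0$, the degenerate boxes $\boxop(a)$ of lattice points $a\in P_m$ that have a zero coordinate, and the facets of boxes. With these conventions, $D(q)=\boxop(q)\setminus U(P_m)$, since $U(P_m\cup\{q\})=U(P_m)\cup\boxop(q)$.

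\textbf{Step 1: a membership criterion for $U(P_m)$.} I claim that for $x\in\RR^3_{>0}$,
\[
 x\in U(P_m)\iff \lceil x_1\rceil+\lceil x_2\rceil+\lceil x_3\rceil\leq m .
\]
For the forward direction, suppose $x\leq a$ with $a\in P_m$. Since each $a_i$ is an integer, $a_i\geq\lceil x_i\rceil$, and summing gives the bound. For the reverse direction, the integer vector $(\lceil x_i\rceil)_i$ is nonnegative with sum at most $m$. I raise one coordinate until the sum equals $m$, which produces a point $a\in P_m$ with $a\geq x$.

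\textbf{Step 2: ceilings inside $\boxop(q)$.} Now take $x\in\boxop(q)$ with all $x_i>0$, so that $x_i\leq p_i+\eps<p_i+1$. This gives $\lceil x_i\rceil\leq p_i+1$. Moreover, $\lceil x_i\rceil=p_i+1$ holds exactly when $x_i\in(p_i,p_i+\eps]$, and $\lceil x_i\rceil=p_i$ holds exactly when $x_i\in(p_i-1,p_i]$. Because $\sum_i p_i=m-1$, Step 1 says that $x\in D(q)$ iff $\sum_i(\lceil x_i\rceil-p_i)\geq 2$, where each summand is at most $1$.

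\textbf{Step 3: case split.} Each summand is at most $1$, so the sum is at least $2$ exactly in the following cases.

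All three summands equal $1$. This is the cube $C(p)$.

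Exactly two summands equal $1$ and the third equals $0$. If the third index is $j$, this is the thin box $L_{ik}(p)$, where $\{i,k\}$ are the other two indices and $x_j\in(p_j-1,p_j]$.

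Any case with a summand at most $-1$ forces the total to be at most $1$, so such points are excluded.

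Taking the union over these cases gives exactly $C(p)\cup L_{12}(p)\cup L_{13}(p)\cup L_{23}(p)$, intersected with $\RR^3_{>0}$. When $p_j=0$, the corresponding thin box meets the open orthant only in $\{x_j=0\}$, which has measure zero. This is consistent with the ``up to measure zero'' formulation, and it should be noted explicitly in the write-up.

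\textbf{Main obstacle.} There is little computational difficulty here. The only real care needed is the bookkeeping of boundaries, namely half-open versus closed intervals, the facets $x_i\in\mathbb{Z}$, and the zero-coordinate lattice points. All of these differ from the stated decomposition only on finitely many planes, so they are harmless for the $\varphi$-measure, because the regions in question are bounded away from the origin.
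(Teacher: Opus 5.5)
Your proof is correct and follows the same route as the paper. You characterize $U(P_m)$ by the ceiling-sum condition $\lceil x_1\rceil+\lceil x_2\rceil+\lceil x_3\rceil\leq m$, then use $p_1+p_2+p_3=m-1$ to force at least two coordinates into the short intervals $(p_i,p_i+\eps]$ and the remaining one into $(p_j-1,p_j]$; you prove the membership criterion and carry out the case split more explicitly than the paper's brief argument does.

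Your remark about $p_j=0$ is a genuine refinement that the paper omits. In that case $L_{ik}(p)$ lies outside the open orthant and meets the closed one only in $\{x_j=0\}$. So the identity holds only after intersecting with $\RR^3_{\geq 0}$, and the count of ``three thin boxes'' is valid only when all coordinates of $p$ are positive. The subsequent equal-contribution lemma implicitly needs exactly this restriction.
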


\begin{proof}
Let $x\in\boxop(q)$.  A scaffold point $r\in P_m$ dominates $x$ if and only if there is an integer vector $r\geq x$ with $r_1+r_2+r_3=m$.  Since $q_i=p_i+\eps<p_i+1$, each coordinate of $x$ lies below $p_i+1$.  The point $x$ fails to be dominated by any scaffold point precisely when the componentwise integer ceiling of $x$ has coordinate sum at least $m+1$.  Since $p_1+p_2+p_3=m-1$, this can occur only when at least two coordinates lie in the short intervals $(p_i,p_i+\eps]$ and the remaining coordinate lies above $p_j-1$.  This gives exactly the three thin boxes listed above, together with their common triple-overlap cube $C(p)$.  Boundary differences have measure zero and do not affect the weighted measure.
\end{proof}

\begin{lemma}[Equal weighted contribution]\label{lem:equal-weighted}
There is a number $\delta_m>0$, depending only on $m$ and $\eps$, such that
\[
 \muvarphi(D(q))=\delta_m
\]
for every $q\in Q_m$.
\end{lemma}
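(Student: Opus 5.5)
The plan is to combine Lemma~\ref{lem:diff-shape} with the observation that $\varphi$ depends only on the coordinate sum $x_1+x_2+x_3$. Define the fixed reference shape
\[
 K=(0,\eps]^3\cup\bigl((0,\eps]\times(0,\eps]\times(-1,0]\bigr)\cup\bigl((0,\eps]\times(-1,0]\times(0,\eps]\bigr)\cup\bigl((-1,0]\times(0,\eps]\times(0,\eps]\bigr).
\]
Lemma~\ref{lem:diff-shape} says exactly that, up to null sets, $D(q(p))=p+K$ for every $p\in P_{m-1}$. The shape $K$ depends only on $\eps$, not on $p$.

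Next I change variables by the translation $x=p+y$, which has unit Jacobian:
\[
 \muvarphi(D(q(p)))=\int_K \varphi(p+y)\dd y=\int_K\frac{\dd y}{(p_1+p_2+p_3+y_1+y_2+y_3)^4}=\int_K\frac{\dd y}{(m-1+y_1+y_2+y_3)^4}.
\]
The last equality uses $p_1+p_2+p_3=m-1$, which holds for every $p\in P_{m-1}$. So the right-hand side is a number $\delta_m$ depending only on $m$ and $\eps$.

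To see that $\delta_m$ is finite and positive, note that on $K$ we have $y_1+y_2+y_3\geq -1$. Hence the denominator satisfies $m-1+y_1+y_2+y_3\geq m-2\geq 1$ for $m\geq3$, so the integrand is bounded. Since $K$ has positive finite Lebesgue measure $\eps^3+3\eps^2$ (up to the overlaps), $\delta_m$ is finite and strictly positive. If an explicit value is wanted, one can split $K$ into the cube and the three thin boxes and integrate $(m-1+s)^{-4}$ over each. By the symmetry of $\varphi$ under coordinate permutations, the three thin boxes contribute equally, and inclusion--exclusion over the common cube gives $\delta_m=3\beta-2\gamma$. Here $\beta$ is the weighted measure of one thin box and $\gamma$ that of the cube; no closed form is needed for the lemma.

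The main obstacle is that $D(q(p))$ must really be the full translate $p+K$. This fails if some coordinate $p_j=0$, because then the thin box with third factor $(p_j-1,p_j]$ lies outside $\RR^3_{\geq0}$, where no anchored box reaches. I would check this first. If Lemma~\ref{lem:diff-shape} is meant only for candidates with all $p_i\geq1$, I would restrict $Q_m$ to $q(p)$ with $p\in P_{m-1}\cap\mathbb{N}_{>0}^3$. This costs nothing in the reduction, since the gadget can be placed in the interior of the layer by taking $m$ large. On that restricted set the translation argument above is complete.
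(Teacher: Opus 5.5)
This is the paper's argument. By Lemma~\ref{lem:diff-shape} every $D(q(p))$ is a translate of one fixed union of boxes by a vector of coordinate sum $m-1$, and $\varphi$ depends only on $x_1+x_2+x_3$, so your substitution $x=p+y$ giving $\int_K(m-1+y_1+y_2+y_3)^{-4}\,\mathrm{d}y$ is exactly the paper's sum-zero-translation step written out; your bound $m-1+y_1+y_2+y_3>m-2\geq 1$ also settles finiteness and $\delta_m>0$, which the paper leaves implicit.

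Your boundary caveat is correct and points to a real omission in the paper's proof. With $0\in\mathbb{N}$, a point $p\in P_{m-1}$ with $p_j=0$ loses the thin box whose long side lies in direction $j$, so its weighted gain is strictly smaller and the gadget must be placed on interior $p$.

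However, your aside $\delta_m=3\beta-2\gamma$ is wrong. $C(p)$ and the three thin boxes are pairwise disjoint (e.g.\ $C(p)$ has third factor $(p_3,p_3+\eps]$ while $L_{12}(p)$ has $(p_3-1,p_3]$), so in fact $\delta_m=3\beta+\gamma$. This matches the volume $3\eps^2+\eps^3$ you quote yourself.
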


\begin{proof}
By Lemma~\ref{lem:diff-shape}, each region $D(q(p))$ is obtained from any other region $D(q(p'))$ by translating the same finite union of boxes by the vector $p-p'$.  Since both $p$ and $p'$ lie in $P_{m-1}$, this translation vector has coordinate sum zero.  The density
\[
 \varphi(x)=\frac{1}{(x_1+x_2+x_3)^4}
\]
depends only on the coordinate sum.  Hence the translation preserves the density pointwise.  Therefore all regions have the same weighted measure.
\end{proof}

\begin{lemma}[Intersection graph]\label{lem:intersection-graph}
Define a graph $T_m$ on vertex set $Q_m$ by connecting $q,q'\in Q_m$ if and only if
\[
 D(q)\cap D(q')
\]
has positive volume.  Then $T_m$ is isomorphic to a finite induced subgraph of the triangular grid.  Moreover, there is a number $\rho_m>0$ such that for every edge $qq'$ of $T_m$,
\[
 \muvarphi(D(q)\cap D(q'))=\rho_m.
\]
\end{lemma}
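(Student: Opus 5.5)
The plan is to compute every pairwise intersection $D(q(p))\cap D(q(p'))$, for $p,p'\in P_{m-1}$, directly from the explicit shape in Lemma~\ref{lem:diff-shape}. Each piece of $D(q(p))$ is a product of three intervals. In coordinate $i$ each interval is either \emph{short}, $(p_i,p_i+\eps]$, or \emph{long}, $(p_i-1,p_i]$. The cube $C(p)$ is contained in each of the three thin boxes, so it suffices to intersect thin boxes with thin boxes.

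\textbf{Step 1: when two coordinate intervals overlap.} Since $\eps<1$ and all $p_i$ are integers, the overlap with positive length happens in exactly three cases:
\begin{itemize}
\item[(i)] two short intervals overlap iff $p_i=p'_i$;
\item[(ii)] two long intervals overlap iff $p_i=p'_i$;
\item[(iii)] the short interval of $p$ meets the long interval of $p'$ iff $p'_i=p_i+1$, and then the overlap is $(p_i,p_i+\eps]$.
\end{itemize}
Hence $L_{ab}(p)\cap L_{cd}(p')$ has positive volume only if $p'_i-p_i\in\{-1,0,1\}$ for every $i$. Combined with $\sum_i p_i=\sum_i p'_i=m-1$, this forces $p'=p$ or $p'=p+e_a-e_b$ with $a\neq b$. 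A pair with any other difference vector has an intersection of measure zero and is not an edge of $T_m$.

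\textbf{Step 2: the neighbouring case.} Take $p'=p+e_a-e_b$ and let $c$ be the third index. Then:
\begin{itemize}
\item Coordinate $a$ needs $p$ short and $p'$ long.
\item Coordinate $b$ needs $p$ long and $p'$ short.
\item Coordinate $c$ needs both short or both long. Both long is impossible, since each thin box has only one long coordinate.
\end{itemize}
So the only intersecting pair is $L_{ac}(p)\cap L_{bc}(p')$. Writing $e=(1,1,1)$, it equals
\[
\bigl(p-e_b\bigr)+(0,\eps]^3 ,
\]
a cube of side $\eps$ whose lower corner $p-e_b$ has coordinate sum $m-2$. All coordinates of $p-e_b$ are nonnegative, because $p'_b=p_b-1\ge 0$. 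Therefore the cube lies in $\RR^3_{>0}$, and in particular it is not cut off at the boundary of the layer.

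The edges of $T_m$ are therefore exactly the pairs with $p-p'\in\{e_a-e_b: a\neq b\}$. On the lattice triangle $P_{m-1}$ this is the standard triangular-grid adjacency: the linear map sending $e_1-e_3,\,e_2-e_3$ to the two grid generators is an isomorphism. Since $T_m$ contains all grid edges among its vertices, it is an induced subgraph of $\Gamma$.

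\textbf{Step 3: equal weighted overlap.} By Step~2, every intersection is a translate $y+(0,\eps]^3$ with $y_1+y_2+y_3=m-2$. As in Lemma~\ref{lem:equal-weighted}, $\varphi$ depends only on the coordinate sum, so translating by a sum-zero vector preserves $\varphi$ pointwise. Hence all edges have the same weight
\[
\rho_m=\muvarphi\bigl((m-2,0,0)+(0,\eps]^3\bigr)>0 .
\]

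The main obstacle is the bookkeeping in Step~2. One has to check that exactly one pair of pieces meets for each of the six neighbour directions, so that the overlap is a single $\eps$-cube rather than a union counted twice. One also has to check that the boundary of $P_{m-1}$, where long intervals such as $(-1,0]$ leave the orthant, does not delete or distort any of these cubes. I would verify this by the sign argument on $p-e_b$ given above.
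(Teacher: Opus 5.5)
Your argument follows the same route as the paper's proof. You read off the pieces of $D(q)$ from Lemma~\ref{lem:diff-shape} and show that a positive-volume overlap forces $p'-p=\pm(e_a-e_b)$. You then identify the overlap as a single $\eps$-cube whose lower corner has coordinate sum $m-2$, and conclude by invariance of $\varphi$ under sum-zero translations. Your version is more explicit than the paper's. In particular, your check that $p-e_b\geq 0$, so the intersection cube is not cut off by the orthant when $p$ lies on the boundary of $P_{m-1}$, is a point the paper's proof passes over.

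One sentence is wrong, however. With the formulas of Lemma~\ref{lem:diff-shape}, the cube $C(p)$ is \emph{not} contained in the thin boxes. For instance, $C(p)$ has third factor $(p_3,p_3+\eps]$, whereas $L_{12}(p)$ has third factor $(p_3-1,p_3]$. In fact the four pieces are pairwise disjoint, which is why the paper later uses the volume $3\eps^2+\eps^3$. So your stated reason does not justify ``it suffices to intersect thin boxes with thin boxes''.

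The repair uses only your own rules (i)--(iii), since they apply to any product of short and long intervals.
\begin{itemize}
\item The conclusion of Step~1 ($p'=p$ or $p'=p+e_a-e_b$) also holds for pairs involving a cube. $C(p)$ is short in all three coordinates, and every piece of $D(q(p'))$ is short in at least two. A positive overlap therefore forces $p$ and $p'$ to agree in two coordinates, hence $p=p'$.
\item In Step~2 the cubes are excluded automatically. Coordinate $b$ requires a long factor in the piece of $p$, and coordinate $a$ requires a long factor in the piece of $p'$; a cube has no long factor.
\end{itemize}
With that sentence replaced, the proof is complete.
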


\begin{proof}
Write $q=q(p)$ and $q'=q(p')$ with $p,p'\in P_{m-1}$.  By Lemma~\ref{lem:diff-shape}, an intersection can occur only when one of the short directions of $D(q)$ meets a long direction of $D(q')$, and symmetrically.  This happens exactly when
\[
 p'-p\in\{\pm(1,-1,0),\pm(1,0,-1),\pm(0,1,-1)\}.
\]
These are precisely the six adjacency directions of a triangular-grid layer.  In that case the intersection is, up to boundary, a cube of side length $\eps$.  For example, if $p'=p+(1,-1,0)$, then
\[
D(q)\cap D(q')=(p_1,p_1+\eps]\times(p_2-1,p_2-1+\eps]\times(p_3,p_3+\eps]
\]
up to measure-zero boundaries.  The other five cases are obtained by coordinate permutation.

All such intersection cubes are translates of one another by vectors whose coordinate sum is zero.  Since $\varphi$ depends only on $x_1+x_2+x_3$, they all have the same positive weighted measure.  Denote this measure by $\rho_m$.
\end{proof}

\begin{lemma}[Independent sets and weighted gain]\label{lem:weighted-gain}
For any $Q'\subseteq Q_m$:
\begin{enumerate}[label=(\alph*)]
\item if $Q'$ is independent in $T_m$, then
\[
 \muvarphi\bigl(U(P_m\cup Q')\setminus U(P_m)\bigr)=|Q'|\delta_m;
\]
\item if $Q'$ contains an edge of $T_m$, then
\[
 \muvarphi\bigl(U(P_m\cup Q')\setminus U(P_m)\bigr)
 \leq |Q'|\delta_m-\rho_m.
\]
\end{enumerate}
\end{lemma}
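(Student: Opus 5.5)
The plan is to write the gain region as a union of the individual optional gain regions and apply inclusion--exclusion, or more simply the union bound together with a single pairwise correction. Observe first that
\[
 U(P_m\cup Q')\setminus U(P_m)=\bigcup_{q\in Q'}\bigl(\boxop(q)\setminus U(P_m)\bigr)=\bigcup_{q\in Q'}D(q),
\]
since $U$ is a union of boxes and $D(q)=U(P_m\cup\{q\})\setminus U(P_m)=\boxop(q)\setminus U(P_m)$. All sets involved are bounded away from the origin, so $\muvarphi$ is a finite measure on them, and every step below is a measure-theoretic statement valid up to null sets.

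For part (a), independence of $Q'$ in $T_m$ means, by the definition of $T_m$ in Lemma~\ref{lem:intersection-graph}, that $D(q)\cap D(q')$ has Lebesgue measure zero for distinct $q,q'\in Q'$. Because $\varphi$ is a locally bounded density away from the origin, these intersections also have $\muvarphi$-measure zero. Additivity of $\muvarphi$ over almost-disjoint sets then gives $\muvarphi(\bigcup_{q\in Q'}D(q))=\sum_{q\in Q'}\muvarphi(D(q))=|Q'|\delta_m$ by Lemma~\ref{lem:equal-weighted}.

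For part (b), let $qq'$ be an edge with $q,q'\in Q'$. I would bound
\[
 \muvarphi\Bigl(\bigcup_{r\in Q'}D(r)\Bigr)\leq \muvarphi\bigl(D(q)\cup D(q')\bigr)+\sum_{r\in Q'\setminus\{q,q'\}}\muvarphi(D(r)),
\]
using subadditivity. The first term equals $2\delta_m-\muvarphi(D(q)\cap D(q'))=2\delta_m-\rho_m$ by inclusion--exclusion for two sets and Lemma~\ref{lem:intersection-graph}. The remaining sum is $(|Q'|-2)\delta_m$ by Lemma~\ref{lem:equal-weighted}, so the total is at most $|Q'|\delta_m-\rho_m$.

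The only delicate point is justifying the union identity $U(P_m\cup Q')\setminus U(P_m)=\bigcup_{q\in Q'} D(q)$ and the passage from ``positive volume'' in the definition of $T_m$ to the weighted measure. The latter holds because $\varphi$ is positive and bounded on the relevant region, so Lebesgue-null and $\muvarphi$-null sets coincide there. Everything else is routine.
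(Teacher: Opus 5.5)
Your proof is correct and is essentially the paper's argument. You write the gain region as $\bigcup_{q\in Q'}D(q)$, use almost-disjointness plus Lemma~\ref{lem:equal-weighted} for (a), and for (b) subtract the single overlap $\rho_m$ of one edge; your split into $\muvarphi(D(q)\cup D(q'))$ plus subadditivity on the remaining regions is an explicit justification of the one-line ``inclusion--exclusion'' bound the paper states.
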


\begin{proof}
The relative gain region is
\[
 U(P_m\cup Q')\setminus U(P_m)=\bigcup_{q\in Q'}D(q).
\]
If $Q'$ is independent, the regions $D(q)$ are pairwise disjoint up to measure-zero boundaries by Lemma~\ref{lem:intersection-graph}, so the weighted measure is the sum of their identical weighted measures, namely $|Q'|\delta_m$.

If $Q'$ contains an edge $qq'$, then inclusion-exclusion gives
\[
 \muvarphi\left(\bigcup_{r\in Q'}D(r)\right)
 \leq \sum_{r\in Q'}\muvarphi(D(r))-\muvarphi(D(q)\cap D(q'))
 = |Q'|\delta_m-\rho_m.
\]
\end{proof}

\begin{lemma}[Scaffold priority]\label{lem:scaffold-priority}
For $m$ sufficiently large and $\eps=1/(100m^4)$, any selection that omits at least one scaffold point from $P_m$ has worse integral-$\Rtwo$ value than every selection containing all of $P_m$ and at least one optional point from $Q_m$.
Equivalently, in reciprocal anchored-box gain language, the total possible weighted gain from all optional regions is smaller than the weighted loss caused by omitting one scaffold box.
\end{lemma}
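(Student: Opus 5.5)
The approach is to compare two weighted measures in reciprocal space via Theorem~\ref{thm:perspective}: the weighted loss incurred by dropping a single scaffold box, and the weighted gain from all optional boxes together. By Theorem~\ref{thm:perspective}, $\Rtwo(S)$ is the $\muvarphi$-measure of the complement of $U(S)$. (Here $S$ is identified with its reciprocal corners; with $S\subseteq P_m\cup Q_m$ these corners are the lattice points themselves.) By the monotonicity in the corollary to Theorem~\ref{thm:submodularity}, adding points never increases $\Rtwo$. So the argument reduces to two chains of inequalities.

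The first chain concerns a selection $S$ that contains $P_m$ and at least one optional point. For such $S$ I will use $\Rtwo(S)\le \Rtwo(P_m)$.

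The second chain concerns a selection $S'$ that omits some $r\in P_m$. Then $S'\subseteq (P_m\setminus\{r\})\cup Q_m$, and I will write
\[
\Rtwo(S')\ge \Rtwo\bigl((P_m\setminus\{r\})\cup Q_m\bigr)=\Rtwo(P_m\cup Q_m)+\muvarphi(X_r),
\]
where $X_r$ is the part of $\boxop(r)$ covered by no other box of $(P_m\setminus\{r\})\cup Q_m$. Moreover $\Rtwo(P_m\cup Q_m)=\Rtwo(P_m)-\muvarphi\bigl(\bigcup_{q\in Q_m}D(q)\bigr)$. Hence it suffices to prove the strict inequality $\muvarphi(X_r)>\muvarphi\bigl(\bigcup_{q}D(q)\bigr)$ for every scaffold point $r$.

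For the gain side, I will use Lemma~\ref{lem:diff-shape}. Each $D(q(p))$ has Lebesgue volume at most $3\eps^2+\eps^3\le 4\eps^2$. Every point of $D(q(p))$ has coordinate sum at least $(m-1)-1=m-2$, so $\varphi\le (m-2)^{-4}$ there. There are $|Q_m|=|P_{m-1}|=\binom{m+1}{2}\le m^2$ optional points. Together these give total gain at most $4m^2\eps^2(m-2)^{-4}$, which is $O(m^{-14})$ for $\eps=1/(100m^4)$.

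For the loss side, the candidate exclusive region is the unit cube $K_r=\prod_i(r_i-1,r_i]$. The first step is to check that no other scaffold box meets its interior. A point with integer ceiling $r$ lies under $r'\in P_m$ only if $r'\ge r$ componentwise, and since both coordinate sums equal $m$, this forces $r'=r$. The second step is to bound the overlap with optional boxes. The box $\boxop(q(p))$ meets $K_r$ only if $p_i+\eps>r_i-1$ for all $i$. With $\sum p_i=\sum r_i-1$, this forces $p=r-e_j$ for some $j$, and the overlap is then the slab $\{x\in K_r: x_j\le r_j-1+\eps\}$, of volume $\eps$. So $\vol(X_r)\ge 1-3\eps$. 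Since $\varphi\ge m^{-4}$ on $K_r$, the loss is at least $(1-3\eps)m^{-4}$, which dominates $4m^2\eps^2(m-2)^{-4}$ for all $m\ge 3$, and in particular for all sufficiently large $m$.

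The step I expect to be the main obstacle is the boundary of the scaffold. If $r$ has a zero coordinate, then $\boxop(r)$ is degenerate, the cube $K_r$ is not available, and the loss is zero; moreover such a point has no finite reciprocal objective vector. I would handle this by taking the scaffold to be the positive layer $P_m^+$, as in Figure~\ref{fig:hv-r2-gadget}, and by restricting the optional points to those $p\in P_{m-1}$ whose relevant regions remain positive. With $r_i\ge1$, the cube $K_r$ lies in $\RR^3_{\ge0}$, and the estimates above go through unchanged. I would also verify that this restriction leaves Lemmas~\ref{lem:equal-weighted}--\ref{lem:weighted-gain} intact.
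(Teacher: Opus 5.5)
Your proof is correct and is essentially the paper's argument: the total optional gain is at most $|Q_m|\cdot 4\eps^2$ times the maximum of $\varphi$ on $x_1+x_2+x_3>m-2$, compared against a unit exclusive cube on which $\varphi\ge m^{-4}$. You are in fact more careful than the paper in two places. First, you measure the exclusive region of the omitted point $r$ against $(P_m\setminus\{r\})\cup Q_m$ and subtract the $\eps$-slabs of $K_r$ covered by optional boxes, a point the paper's proof passes over. Second, you rightly observe that zero-coordinate scaffold points have degenerate boxes and no finite reciprocal, so the scaffold has to be the positive layer $P_m^+$ of Figure~\ref{fig:hv-r2-gadget} for the lemma to hold.

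Two slips are harmless. The gain bound is of order $m^{-10}$, not $m^{-14}$. Also, $\boxop(q(p))$ meets $K_r$ for $p=r+e_i-e_j-e_k$ as well as for $p=r-e_j$, but only in a set of volume $\eps^2$ contained in the slab $\{x\in K_r:x_j\le r_j-1+\eps\}$, so $\vol(X_r)\ge 1-3\eps$ still holds.
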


\begin{proof}
The construction is contained in the cube $[0,m]^3$.  The optional gain regions satisfy
\[
        m-2 < x_1+x_2+x_3 < m+1,
\]
up to boundary sets of measure zero.  Hence, for $m\geq 4$,
\[
        \varphi(x)\leq \frac{16}{m^4}
\]
on every optional gain region.  By Lemma~\ref{lem:diff-shape}, each optional region has ordinary volume $3\eps^2+\eps^3\leq 4\eps^2$, and $|Q_m|<m^2$.  Thus the total optional weighted gain is at most
\[
        m^2\cdot 4\eps^2\cdot \frac{16}{m^4}
        =\frac{64\eps^2}{m^2}.
\]
With $\eps=1/(100m^4)$ this is less than $1/(100m^4)$.

On the other hand, the ordinary anchored-box scaffold has the property used in the unweighted construction: omitting a point of $P_m$ removes an exclusive region of ordinary volume at least one from the scaffold union.  This exclusive region lies in $[0,m]^3$, where $x_1+x_2+x_3\leq m$ on the relevant boundary layer.  Therefore its weighted measure is at least $1/m^4$.  The total weighted gain of all optional boxes is smaller than this loss.  Hence no collection of optional boxes can compensate for omitting a scaffold point.
\end{proof}

\begin{remark}[On the choice of $\eps$]
The original unweighted anchored-box proof uses $\eps=1/(4m^2)$, because ordinary volume gives unit loss for a missing scaffold point and optional gain $O(m^2\eps^2)$.  For the weighted density, using a smaller rational $\eps=1/(100m^4)$ is a harmless strengthening that keeps the same intersection graph and gives a clean polynomial separation between mandatory scaffold loss and total optional gain.  All coordinates remain rational with polynomial bit length.
\end{remark}

\subsection{Reduction to integral \texorpdfstring{$\Rtwo$}{R2} subset selection}

We now formulate the decision problem.

\begin{problem}[Three-objective integral $\Rtwo$ subset selection]
Given a finite Pareto-front approximation $P\subset\RR^3_{>0}$, an integer $k$, and a rational threshold $T$, decide whether there exists $S\subseteq P$ with $|S|=k$ and
\[
 \Rtwo(S)\leq T.
\]
\end{problem}

\begin{theorem}[NP-hardness in three objectives]\label{thm:np-hard}
Three-objective fixed-cardinality integral-$\Rtwo$ subset selection is NP-hard.
\end{theorem}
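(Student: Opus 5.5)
We reduce from Independent Set on Induced Triangular Grid. Given a finite induced subgraph $A$ of $\Gamma$ and an integer $\ell$, we first choose $m$ polynomial in $|A|$ such that $A$ fits, after a translation, into the triangular layer $P_{m-1}$; the layer $P_{m-1}$ is itself a triangular grid under the adjacency directions identified in Lemma~\ref{lem:intersection-graph}. We then let $Q_A\subseteq Q_m$ consist of the optional points $q(p)$ for the lattice points $p$ occupied by vertices of $A$. By Lemma~\ref{lem:intersection-graph}, the intersection graph of the regions $D(q)$, $q\in Q_A$, is the subgraph of $T_m$ induced by $Q_A$, and this is isomorphic to $A$ because $A$ is induced in $\Gamma$.

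Next we pass back to objective space through Theorem~\ref{thm:perspective}. Every box corner $a\in P_m\cup Q_A$ with positive coordinates corresponds to the objective vector $p=(1/a_1,1/a_2,1/a_3)$. Scaffold points of $P_m$ that have a zero coordinate do not fit this map. We plan to handle them in one of two ways: replace a zero coordinate by a tiny rational $\eta$, whose effect on all measures is below $\rho_m/4$, or discard them, since boxes with a zero side have measure zero and contribute nothing. We will check which option keeps the exclusive-loss argument of Lemma~\ref{lem:scaffold-priority} intact; the interior points of $P_m$ seem to suffice. The instance is then $P=b^{-1}(P_m^{+}\cup Q_A)$ with $k=|P_m^{+}|+\ell$. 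All coordinates are rational of polynomial bit length by the remark on $\eps$.

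The threshold $T$ must be rational. We therefore do not compute $\Rtwo$ exactly. Using Theorem~\ref{thm:perspective}, write $R^{\ast}=\Rtwo(b^{-1}(P_m^+))$. The candidate threshold is any rational $T$ with $R^{\ast}-\ell\delta_m\le T< R^{\ast}-\ell\delta_m+\rho_m$. This is the step we expect to be the main obstacle: $\delta_m$, $\rho_m$ and $R^{\ast}$ are integrals of $(x_1+x_2+x_3)^{-4}$ over boxes, and such integrals involve logarithms. To get around this, we would compute polynomial-precision rational bounds on them. Since $\rho_m\ge \eps^3/(m+1)^4$ is only polynomially small, approximations to within $\rho_m/4$ suffice, and the ordinary volumes of boxes are explicit. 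Alternatively, we would exploit that the decision problem may accept an interval-separated threshold. We will spell out one of these approaches.

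For correctness, suppose $A$ has an independent set $I$ with $|I|=\ell$. Select all scaffold points together with $I$. By Theorem~\ref{thm:perspective} and Lemma~\ref{lem:weighted-gain}(a), $\Rtwo=R^{\ast}-\ell\delta_m\le T$.

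For the converse, take any feasible $S$ with $|S|=k$ and $\Rtwo(S)\le T$. If $S$ omits a scaffold point, Lemma~\ref{lem:scaffold-priority} makes its value worse than any full-scaffold selection with an optional point. In particular its value exceeds $R^{\ast}-\ell\delta_m+\rho_m$; a short check with the same margin bounds ensures this. So $S$ contains the scaffold and exactly $\ell$ optional points. If those optional points contain an edge, Lemma~\ref{lem:weighted-gain}(b) gives $\Rtwo(S)\ge R^{\ast}-\ell\delta_m+\rho_m>T$. Hence they form an independent set of size $\ell$ in $A$. The reduction is polynomial, so Theorem~\ref{thm:np-hard} follows.
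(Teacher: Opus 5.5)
Your argument is correct and is essentially the paper's reduction: scaffold plus the optional points $q(p)$ for the vertices of $A$, reciprocal coordinates via Theorem~\ref{thm:perspective}, scaffold priority, and the gap between $\ell\delta_m$ and $\ell\delta_m-\rho_m$ from Lemma~\ref{lem:weighted-gain}; restricting to $P_m^{+}$ is the right repair of the zero-coordinate issue that the paper's proof leaves implicit, whereas the $\eta$-perturbation would fail, because dropping an $\eta$-thin scaffold box loses only $O(\eta)$ volume, far less than the gain $\delta_m$ of an extra optional point (likewise, embed $A$ in the interior of $P_{m-1}$, since for $p$ with a zero coordinate some thin boxes of Lemma~\ref{lem:diff-shape} degenerate and $\muvarphi(D(q(p)))<\delta_m$). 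The step you call the main obstacle is not one: the successive antiderivatives of $s^{-4}$, $s^{-3}$, $s^{-2}$ produce no logarithm, so $\int_{\prod_i[a_i,b_i]}(x_1+x_2+x_3)^{-4}\dd x=\frac16\sum_{v}(-1)^{\#\{i:v_i=b_i\}}(v_1+v_2+v_3)^{-1}$ over the eight vertices $v$ of the box, hence $\delta_m$, $\rho_m$ (and $R^\ast$, the integral of a piecewise-linear function over a rational subdivision of $\DeltaTwo$) are exact rationals of polynomial size, and the paper simply uses the midpoint threshold $T=R^\ast-\ell\delta_m+\rho_m/2$ without any approximation.
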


\begin{proof}
We reduce from Independent Set on induced triangular-grid graphs.  Given an instance $(A,\ell)$, choose $m$ large enough so that the graph $T_m$ from Lemma~\ref{lem:intersection-graph} contains an induced copy of $A$.  This is possible because $T_m$ is a finite triangular-grid patch whose size grows with $m$.

Construct the anchored-box scaffold $P_m$ and the optional points $Q_m(A)\subseteq Q_m$ corresponding to the vertices of $A$.  The anchored-box candidate set is
\[
 C=P_m\cup Q_m(A).
\]
Now pass to objective space by reciprocal coordinates:
\[
 \widehat C=\left\{\left(\frac1{c_1},\frac1{c_2},\frac1{c_3}\right):c\in C\right\}\subset\RR^3_{>0}.
\]
Set
\[
 k=|P_m|+\ell.
\]
By the scaffold-priority lemma, any optimal or threshold-reaching selection must contain all reciprocal points corresponding to $P_m$.  Therefore the remaining choice is exactly a choice of $\ell$ optional points.

Let $C_0=P_m$ be the scaffold.  By the perspective representation, for any optional set $Q'\subseteq Q_m(A)$, the integral-$\Rtwo$ improvement over the scaffold is
\[
 \muvarphi\bigl(U(C_0\cup Q')\setminus U(C_0)\bigr).
\]
By Lemma~\ref{lem:weighted-gain}, this improvement is exactly $\ell\delta_m$ when the selected optional vertices form an independent set of size $\ell$, and it is at most $\ell\delta_m-\rho_m$ when the selected optional vertices contain an edge.  Choose a threshold $T$ halfway between the two corresponding $\Rtwo$ values, for instance
\[
 T=
 \Rtwo(\widehat C_0)-\ell\delta_m+\frac{\rho_m}{2},
\]
where $\widehat C_0$ denotes the reciprocal objective points corresponding to $C_0$.  Then there exists a size-$k$ subset of $\widehat C$ with $\Rtwo$ value at most $T$ if and only if $A$ has an independent set of size $\ell$.

All coordinates used in the construction are rational with polynomial bit length.  The quantities $\delta_m$ and $\rho_m$ are integrals of $(x_1+x_2+x_3)^{-4}$ over finite unions of boxes with rational endpoints.  Repeated elementary integration expresses them as rational combinations of reciprocal linear terms in the endpoints, hence with polynomially representable thresholds.  Therefore the reduction is a polynomial many-one reduction.
\end{proof}

\begin{remark}[Nondominance]
The anchored-box construction uses points on triangular layers and their small perturbations.  These points are nondominated in the maximization order of anchored boxes.  Taking reciprocals converts them into nondominated minimization points.  If a strict general-position Pareto-front approximation is desired, arbitrarily small rational perturbations can separate equal coordinates without changing the gap in the reduction.
\end{remark}

\section{Exact evaluation in three objectives}\label{sec:evaluation}

The approximation and hardness results are independent of a particular evaluation algorithm.  For practical algorithms, however, one needs to compute or estimate $I_B(S)$ and its marginal gains.  In three objectives, exact evaluation is possible by planar subdivision of the weight simplex\footnote{In \cite{jaszkiewicz2025exact}, an algorithm for integral $R_2$ computation that is exponential in $n$ but fast on average is proposed.}.  Appendix~\ref{app:implementation} gives a compact reference implementation of this arrangement-based evaluator and validates it against Monte Carlo integration.

\begin{theorem}[Arrangement-based exact evaluation in fixed dimension three]\label{thm:arrangement}
Let $S\subseteq P$ and assume all coordinates are rational.  The value $\Rtwo(S)$ can be evaluated exactly by constructing a planar subdivision of $\DeltaTwo$ on which the active point and active objective coordinate of the lower Tchebycheff envelope are fixed.  The procedure is polynomial in $|S|$ for fixed dimension three, although with a high worst-case degree $O(n^6)$.
\end{theorem}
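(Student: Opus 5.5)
The plan is to refine the lower envelope into regions on which it is a single linear function, integrate each region exactly, and bound the number of regions. For $n=|S|$ the envelope $\tau_S$ is the minimum of the $3n$ linear functions $w\mapsto w_ip_i$, with each point contributing only on the part of $\DeltaTwo$ where coordinate $i$ attains the maximum in $g_p$. For each pair (point $p$, coordinate $i$) we therefore consider the \emph{active cell}
\[
 A_{p,i}=\{w\in\DeltaTwo:\ w_ip_i\geq w_jp_j\ \forall j,\ \ w_ip_i\leq g_{p'}(w)\ \forall p'\in S\}.
\]
We work in the coordinates $(w_1,w_2)$ with $w_3=1-w_1-w_2$. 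All defining inequalities are then linear with rational coefficients: $w_ip_i\geq w_jp_j$ is linear, and $w_ip_i\leq g_{p'}(w)$ is the disjunction $\bigvee_j w_ip_i\leq w_jp'_j$. This disjunction is why the cell is a union of polygons rather than a single convex polygon.

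To handle it, we take the arrangement $\mathcal A$ of all lines $w_ip_i=w_jp'_j$ for $p,p'\in S$ and $i,j\in\{1,2,3\}$, including $p=p'$, together with the three sides of $\DeltaTwo$. This gives $O(n^2)$ lines, hence $O(n^4)$ faces, and the arrangement can be built exactly over the rationals in $O(n^4)$ time.

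On each open face $f$ of $\mathcal A$ the ordering of all $3n$ values $w_ip_i$ is fixed. Consequently the maximizing coordinate of every $g_p$ is fixed, and so are the minimizing point $p^\ast$ and its active coordinate $i^\ast$. To find them we take a rational interior point of $f$ (for example the centroid of three of its vertices) and evaluate there. This step is naive: $O(n)$ work per face, or $O(n^2)$ if we also recompute all sign tests.

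On $f$ we then have $\tau_S(w)=p^\ast_{i^\ast}w_{i^\ast}$, which is linear. Its integral over the convex polygon $f$ equals $\mathrm{area}(f)$ times the value at the centroid of $f$. Triangulating $f$ from one vertex makes this exact rational arithmetic: each triangle contributes its area times the mean of the linear function at its three vertices. Summing over faces gives $\Rtwo(S)$ exactly as a rational number of polynomial bit length, since all vertices are intersections of lines with rational coefficients of bounded size.

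For the running time, the face count is $O(n^4)$ and the per-face work is $O(n^2)$ if we locate each face's active pair by brute force over all points and coordinates with sign checks. Together this gives $O(n^6)$, matching the stated degree. The correctness step is the claim that $\tau_S$ is linear on each face, and it follows because every breakpoint of $\tau_S$ (a change of active coordinate or of active point) lies on one of the arrangement lines.

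I expect the main obstacle to be the careful bookkeeping that the arrangement really captures all breakpoints. The point is that a change of the active point $p\to p'$ happens where $g_p=g_{p'}$, that is, where $w_ip_i=w_jp'_j$ for the currently active $i,j$, and this is exactly one of the included lines. Alongside this, degenerate faces, such as collinear vertices or lines coinciding when $p_i=p'_j$, contribute zero area and must be handled by deduplicating lines with exact rational comparison.
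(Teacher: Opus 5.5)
Your proposal is correct and takes essentially the same route as the paper, in the refined form given in Appendix~\ref{app:implementation}. You form the arrangement of all pairwise equality lines among the $3n$ affine pieces $w_ip_i$, which fixes their ordering on every cell, so $\tau_S$ is a single affine function there and can be integrated exactly as area times value at the centroid. The only difference is in the cost accounting: the paper attributes the $O(n^6)$ to naive incremental polygon splitting ($\sum_{i\le L}O(i^2)=O(L^3)$ with $L=O(n^2)$) and spends only $O(n)$ per cell, whereas you build the arrangement in $O(n^4)$ and reach $O(n^6)$ only through an unnecessarily pessimistic $O(n^2)$ per-face test. Your own argument with $O(n)$ per face gives $O(n^5)$, which still meets the stated bound.
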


\begin{proof}[Proof sketch]
Each function $g_p(w)=\max_i w_ip_i$ is convex and piecewise linear over $\DeltaTwo$.  Its internal break lines are line segments on which $w_ip_i=w_jp_j$.  The lower envelope $\tau_S(w)=\min_{p\in S}g_p(w)$ changes either at an internal break line of some $g_p$, or at a point where two candidate functions have equal value.  The equality $g_p(w)=g_q(w)$ is a constant-complexity piecewise-linear curve, because each side is the maximum of three linear forms.  Inserting all such break and equality curves into a planar arrangement yields cells on which $\tau_S(w)$ is a fixed linear function $w_ip_i$.  The integral over each polygonal cell is the integral of a linear function over a rational polygon and can be evaluated exactly by triangulation.  Summing over cells gives $\Rtwo(S)$. (See appendix for a detailed analysis and numerical test and implementation.)
\end{proof}


\section{Algorithmic directions}\label{sec:algorithms}

The two main results have complementary implications.  The NP-hardness theorem rules out a general polynomial-time exact algorithm unless $\mathrm{P}=\mathrm{NP}$.  The submodularity theorem, together with the ideal-point gap theorem, Theorem~\ref{thm:ideal-r2-greedy-gap}, gives a robust approximation framework for the original fixed-ideal-point $\Rtwo$ values.

\paragraph{Greedy and lazy greedy.}
The simplest method is greedy selection by largest marginal integral-$\Rtwo$ improvement.  Since marginal gains decrease, lazy priority-queue implementations can reduce the number of exact marginal evaluations.  Appendix~\ref{app:greedy-implementation} documents the accompanying greedy implementation that uses the exact three-objective evaluator as a marginal-gain oracle.

\paragraph{Quadrature and sampling.}
The integral over $\DeltaTwo$ can be approximated by deterministic quadrature or Monte Carlo sampling.  Sampling turns the problem into a finite maximum-coverage-type problem over sampled weights: each candidate contributes a vector of scalar gains, and the objective is the sum over sampled weights of the maximum selected gain.

\paragraph{Exact arrangement evaluation.}
For small and medium-sized instances, exact planar arrangement integration is possible in fixed dimension three.  This can provide exact marginal gains for greedy or exact objective values for branch-and-bound search.

\paragraph{Branch-and-bound using submodular upper bounds.}
For exact search, submodularity gives simple upper bounds: from a partial set $S$, the best additional gain from $r$ more points is at most the sum of the $r$ largest singleton marginal gains with respect to $S$.  This yields a branch-and-bound method, although worst-case exponential behavior should be expected.

\paragraph{Approximation beyond greedy.}
The weighted anchored-box perspective suggests that geometric approximation schemes for hypervolume-style box selection may inspire stronger approximation algorithms for special subclasses of integral-$\Rtwo$ instances.  However, the Tchebycheff-shadow family is not identical to ordinary box-union volume, so such improvements require separate analysis.

\section{Conclusion}

Three-objective integral-$\Rtwo$ subset selection has a clean scalarization-space interpretation and a useful reciprocal-space interpretation.  The scalarization-space view shows that improvements are volumes of unions of Tchebycheff shadows, which immediately implies monotone submodularity and the classical greedy $(1-1/e)$ approximation guarantee.  For the ordinary ideal-point based indicator this can be stated without changing the optimization target: once the ideal point, weight distribution, and dominated anchor value $R_0$ are fixed, greedy closes at least a $(1-1/e)$-fraction of the maximum possible $\Rtwo$ gap between $R_0$ and the optimal cardinality-$k$ value.  Equivalently, the residual gap to the optimal $\Rtwo$ value is at most $1/e$ of the initial anchor-to-optimum gap.  This is a main positive result of the paper and should be understood as a gap guarantee for the original $\Rtwo$ values, not as a multiplicative approximation of the residual $\Rtwo$ value itself.

The reciprocal-space view gives the complementary negative result.  Exact minimization is computationally hard: by a perspective transformation, improvements become weighted anchored-box gains, and the three-dimensional anchored-box independent-set gadget can be adapted to the density $(x_1+x_2+x_3)^{-4}$.  This yields NP-hardness for exact fixed-cardinality integral-$\Rtwo$ subset selection in three objectives.

Together these facts give a concise algorithmic picture: The two-objective case has exact polynomial-time algorithms because the weight domain is one-dimensional and ordered. With three objectives, exact selection is NP-hard, but the fixed-ideal-point improvement objective remains monotone submodular and thus supports a principled polynomial-time greedy approximation. Future work should address exact and approximate or incremental evaluation, practical lazy-greedy and branch-and-bound methods, and special geometric cases with exploitable structure.

\appendix
\section{Reference implementation of arrangement-based exact evaluation}\label{app:implementation}

This appendix documents the small Python implementation accompanying this report.  The implementation is intended as a transparent reference implementation of Theorem~\ref{thm:arrangement}; it is not intended to compete with specialized Quick-$R_2$.  The code is included in the github repository referenced at the end of the paper.

\subsection{Algorithm implemented}

For a finite set $S\subset\RR^3_{>0}$, the code evaluates
\[
\Rtwo(S)=\int_{\DeltaTwo} \tau_S(w)\,\dd w,
\qquad
\tau_S(w)=\min_{p\in S}\max_{i=1,2,3} w_i p_i,
\]
with the simplex represented by coordinates $(w_1,w_2)$ and $w_3=1-w_1-w_2$.  The implementation uses the following refinement of the proof of Theorem~\ref{thm:arrangement}.

\begin{enumerate}[label=\textbf{Step \arabic*.},leftmargin=*]
\item For each point $p\in S$, create the three affine pieces
\[
  w_1p_1,
  \qquad
  w_2p_2,
  \qquad
  (1-w_1-w_2)p_3.
\]
\item Insert all equality lines between all affine pieces into the simplex.  This arrangement refines the lower-envelope subdivision.  On every resulting cell, the ordering of all affine pieces is fixed.
\item For each polygonal cell, choose an interior representative point.  At this representative point, determine the active point $p$ and active coordinate $i$ realizing
\(
\tau_S(w)=w_i p_i.
\)
Because the arrangement fixes the ordering of all affine pieces, this active affine function is valid throughout the cell, except possibly on cell boundaries of measure zero.
\item Integrate this affine function over the polygon.  If $a w_1+b w_2+c$ is active on a polygon $C$, then
\[
  \int_C (a w_1+b w_2+c)\,\dd w
  = \operatorname{area}(C)\,(a\bar w_1+b\bar w_2+c),
\]
where $(\bar w_1,\bar w_2)$ is the centroid of $C$.
\item Sum these cell integrals.
\end{enumerate}

The current code uses floating-point polygon operations through Shapely.  Thus it implements the exact subdivision principle with ordinary numerical geometric predicates.  A fully rational implementation would replace the polygon clipping and centroid computations by rational arithmetic on line intersections.

\paragraph{Time Complexity.}For a set of \(n\) points in three objectives, the implementation forms the
arrangement induced by all pairwise equality lines of the \(3n\) affine
pieces \(w_1p_1,w_2p_2,(1-w_1-w_2)p_3\).  Hence the number of lines is
\(L=O(n^2)\), and the number of arrangement cells is \(O(L^2)=O(n^4)\).
The reference implementation constructs the subdivision by incremental
polygon splitting.  Since after \(i\) inserted lines there are at most
\(O(i^2)\) cells, the total splitting work is bounded by
\(\sum_{i=1}^L O(i^2)=O(L^3)=O(n^6)\).  Active-piece identification and
cell integration require \(O(n)\) work per cell, hence \(O(n^5)\) time,
and are dominated by the arrangement construction.  Thus the implemented
exact evaluator has worst-case time \(O(n^6)\) and space \(O(n^4)\).
For an improvement \(I_B(S)=R_2(B)-R_2(B\cup S)\), after caching
\(R_2(B)\), the cost is \(O((|B|+|S|)^6)\).

\subsection{Monte Carlo validation}

The verification script compares the arrangement value to independent Monte Carlo integration over $\DeltaTwo$.  The reported values use the unnormalized Lebesgue measure on $\DeltaTwo$, whose area is $1/2$.  Therefore a probability-average version of $\Rtwo$ over the simplex is obtained by multiplying the reported integral values by $2$.

\begin{table}[H]
\centering
\begin{tabular}{lrrrrrrr}
\toprule
case & $n$ & exact & Monte Carlo & s.e. & abs. error & cells & lines\\
\midrule
hand4   & 4 & 0.141937186 & 0.141891084 & $6.18\cdot 10^{-5}$ & $4.61\cdot 10^{-5}$ & 751 & 51\\
random3 & 3 & 0.140613745 & 0.140578829 & $8.51\cdot 10^{-5}$ & $3.49\cdot 10^{-5}$ & 244 & 30\\
random4 & 4 & 0.128206768 & 0.128035663 & $7.93\cdot 10^{-5}$ & $1.71\cdot 10^{-4}$ & 753 & 51\\
\bottomrule
\end{tabular}
\caption{Validation of the reference arrangement evaluator against Monte Carlo integration with 200,000 samples.  The absolute errors are of the same scale as the Monte Carlo standard errors, as expected.}
\label{tab:implementation-validation}
\end{table}

The table shows that the direct arrangement implementation agrees with Monte Carlo integration within sampling error on the tested small instances.  The number of cells already reaches several hundred for four points, which illustrates why this code should be viewed as a reference implementation of the geometric principle rather than as a tuned large-scale evaluator.

\subsection{Reproducibility}

The included \texttt{README.md} describes the command-line use.  In brief, from the artifact folder run
\begin{verbatim}
python3 r2_exact3d.py
python3 test_r2_exact3d.py
\end{verbatim}
The second command writes \texttt{experiment\_results.csv}.  The only nonstandard dependencies are \texttt{numpy} and \texttt{shapely}.  The examples use small point sets because the full arrangement of equality lines grows quickly with the number of candidate points.

\section{Reference implementation of greedy subset selection}\label{app:greedy-implementation}

This appendix documents the accompanying Python implementation of the greedy algorithm for fixed-cardinality integral-$\Rtwo$ subset selection in three objectives.  The implementation is included as \texttt{greedy\_r2\_3d.py}, with a validation script \texttt{test\_greedy\_r2\_3d.py}.  It uses the exact arrangement evaluator from Appendix~\ref{app:implementation} as a black-box oracle for evaluating candidate subsets and marginal improvements.

\subsection{Greedy rule implemented}

Let $P\subset\RR^3_{>0}$ be the candidate set and let $k$ be the desired subset size.  The code fixes a dominated anchor $\bar p$ and sets
\[
 R_0=\Rtwo(\{\bar p\}),
 \qquad
 Q_{\bar p}(S)=R_0-\Rtwo(S).
\]
By default, the anchor is chosen coordinatewise above all candidates,
\[
 \bar p_i = 1.05\max_{p\in P}p_i+10^{-9},\qquad i=1,2,3,
\]
which guarantees $g_{\bar p}(w)\geq g_p(w)$ for all candidates $p$ and all simplex weights $w$ in the minimization/loss convention used in this report.

The greedy algorithm starts with $S_0=\emptyset$.  At step $t$, it chooses
\[
 p_t \in \arg\max_{p\in P\setminus S_{t-1}}
 \bigl\{Q_{\bar p}(S_{t-1}\cup\{p\})-Q_{\bar p}(S_{t-1})\bigr\}.
\]
Since $R_0$ is constant, this is equivalent, after the first step, to choosing the largest exact decrease in the original ideal-point based $\Rtwo$ value,
\[
 p_t \in \arg\max_{p\in P\setminus S_{t-1}}
 \bigl\{\Rtwo(S_{t-1})-\Rtwo(S_{t-1}\cup\{p\})\bigr\}.
\]
At the first step, it is equivalent to selecting the singleton with the smallest $\Rtwo$ value, or equivalently the singleton with the largest anchor-normalized gain.

\begin{pseudocode}{Reference greedy subset selection using exact three-objective $\Rtwo$ evaluation}
\label{alg:greedy-reference-implementation}
\textbf{Input:} Candidate set $P\subset\RR^3_{>0}$, cardinality $k$, dominated anchor $\bar p$.\par
\textbf{Output:} Greedy subset $S$ of size $k$.\par\medskip
$R_0\gets \Rtwo(\{\bar p\})$ and $S\gets\emptyset$.\par
\textbf{for} $t=1,\ldots,k$ \textbf{do}\par
\quad \textbf{for each} $p\in P\setminus S$ \textbf{do}\par
\quad\quad \textbf{if} $S=\emptyset$ \textbf{then} $\Delta(p)\gets R_0-\Rtwo(\{p\})$.\par
\quad\quad \textbf{else} $\Delta(p)\gets \Rtwo(S)-\Rtwo(S\cup\{p\})$.\par
\quad \textbf{end for}\par
\quad choose $p_t\in\argmax_{p\in P\setminus S}\Delta(p)$ and set $S\gets S\cup\{p_t\}$.\par
\textbf{end for}\par
\textbf{return} $S$.
\end{pseudocode}

The implementation caches exact evaluations of previously seen subsets.  This is sufficient for the small validation cases in this artifact.  For larger instances, one would normally replace this direct implementation by a lazy-greedy version, an incremental exact evaluator, or Monte Carlo estimates of marginal gains.

\subsection{Validation against brute force}

For small instances, the script also enumerates all cardinality-$k$ subsets and compares the greedy solution to the exact optimum.  This brute-force comparison is not part of the greedy algorithm; it is included only as a validation tool.  The same selected greedy subset is also checked by Monte Carlo integration of its $\Rtwo$ value.

\begin{table}[H]
\centering
\begin{tabular}{lrrrrrrr}
\toprule
case & $n$ & $k$ & greedy set & optimal set & greedy $\Rtwo$ & optimal $\Rtwo$ & gain ratio\\
\midrule
hand4\_k2   & 4 & 2 & $\{2,0\}$   & $\{0,2\}$   & 0.167597707 & 0.167597707 & 1.000\\
hand4\_k3   & 4 & 3 & $\{2,0,3\}$ & $\{0,2,3\}$ & 0.149428491 & 0.149428491 & 1.000\\
random4\_k2 & 4 & 2 & $\{3,0\}$   & $\{0,3\}$   & 0.127915995 & 0.127915995 & 1.000\\
\bottomrule
\end{tabular}
\caption{Small validation cases for the exact greedy implementation.  The gain ratio is $Q_{\bar p}(G_k)/Q_{\bar p}(S_k^\star)$; in all three tests greedy finds an optimal subset.  The theoretical guarantee from Theorem~\ref{thm:ideal-r2-greedy-gap} is the weaker universal lower bound $1-1/e\approx0.632$.}
\label{tab:greedy-implementation-validation}
\end{table}

The output file \texttt{greedy\_experiment\_results.csv} additionally records the anchor value $R_0$, the exact greedy and optimal gains, the number of brute-force combinations, runtimes, and a Monte Carlo estimate for the greedy subset.  The Monte Carlo estimates agree with the exact greedy values within sampling error in the included runs.

\subsection{Reproducibility}

From the artifact folder, run
\begin{verbatim}
python3 greedy_r2_3d.py
python3 test_greedy_r2_3d.py
\end{verbatim}
The first command prints one small worked example.  The second command writes \texttt{greedy\_experiment\_results.csv}.  The same dependencies as in Appendix~\ref{app:implementation} are used, namely \texttt{numpy} and \texttt{shapely}.

\section{Python verification of the perspective mapping}\label{app:perspective-verification}

This appendix records a compact numerical verification of the perspective transformation used in Theorem~\ref{thm:perspective}.  The accompanying script \texttt{verify\_perspective\_mapping.py} is intended as a reproducibility check for the identity between scalarization-space improvement and weighted reciprocal-box gain.

For a finite set $S\subset\RR^3_{>0}$, the script checks the pointwise equivalence
\[
 t\leq \tau_S(w)
 \quad\Longleftrightarrow\quad
 x=w/t\notin U(b(S)),
\]
the Jacobian identity
\[
 \dd w\,\dd t=(x_1+x_2+x_3)^{-4}\dd x,
\]
and the improvement identity
\[
 \Rtwo(B)-\Rtwo(B\cup S)
 =
 \int_{U(b(B\cup S))\setminus U(b(B))}
 \frac{\dd x}{(x_1+x_2+x_3)^4}.
\]
The scalarization-side value is computed with the arrangement evaluator from Appendix~\ref{app:implementation}.  The reciprocal-space value is independently estimated by Monte Carlo sampling in a bounding box containing the anchored-box difference region.

The test instance uses
\[
B=\{(1.20,1.30,1.10),(1.05,1.75,1.45)\}
\]
and
\[
S=\{(0.78,1.55,1.35),(1.42,0.82,1.22),(1.30,1.25,0.74)\}.
\]
These points were chosen only to produce a nontrivial improvement region.

\begin{table}[H]
\centering
\begin{tabular}{lr}
\toprule
quantity & value \\
\midrule
Exact $\Rtwo(B)$ & 0.355322708469 \\
Exact $\Rtwo(B\cup S)$ & 0.262508524352 \\
Exact improvement & 0.092814184117 \\
Scalarization Monte Carlo estimate & 0.092739326536 \\
Scalarization Monte Carlo s.e. & $1.72\cdot 10^{-4}$ \\
Weighted reciprocal-box Monte Carlo estimate & 0.092744655206 \\
Weighted reciprocal-box Monte Carlo s.e. & $2.38\cdot 10^{-4}$ \\
Pointwise failures for $B$ & $0/200000$ \\
Pointwise failures for $B\cup S$ & $0/200000$ \\
Maximum relative Jacobian error & $1.41\cdot 10^{-15}$ \\
Arrangement cells for $B$ & 53 \\
Arrangement cells for $B\cup S$ & 1775 \\
\bottomrule
\end{tabular}
\caption{Verification of the perspective mapping.  Both Monte Carlo estimates agree with the exact scalarization-side improvement within sampling error; the pointwise map produced no membership failures in the reported samples.}
\label{tab:perspective-verification}
\end{table}

To reproduce the table, run
\begin{verbatim}
python3 verify_perspective_mapping.py
\end{verbatim}
from the artifact folder.  The script writes \texttt{perspective\_mapping\_results.csv}.  It uses \texttt{numpy}, \texttt{shapely}, and the exact evaluator \texttt{r2\_exact3d.py}.

\paragraph{Reproducibility repository}\label{app:reproducibility-ai}

The source code and reproducibility material accompanying the implementation appendices are maintained at
\[
\texttt{\url{https://github.com/emmerichmtm/EvaluationAndSubsetSelectionIntegralR2in3D}}.
\]

\paragraph{Declaration on the use of generative AI.}
OpenAI generative AI models were used as assistance for software prototyping, debugging support, README drafting, LaTeX editing, and language improvement.   The mathematical arguments, algorithmic choices, code outputs, validation results, and final text were reviewed, checked, and edited by the author.  Responsibility for the content of the manuscript and the accompanying implementation remains with the author.

\end{document}